\newtheorem{thrm}{Theorem}[section]
\newtheorem{lem}[thrm]{Lemma}
\theoremstyle{definition}
\numberwithin{equation}{section}
\author{Wen-ming Lu and Lin Zhang}
\address{
Wen-ming Lu\newline\indent School of Science\\Hangzhou Dianzi
University\\Hangzhou\\People's Republic of
China}\email{lu\_wenming@163.com}
\address{
Lin Zhang\newline\indent Department of Mathematics\\
Zhejiang University\\Hangzhou\\People's Republic of China}
\email{godyalin@163.com;linyz@zju.edu.cn}
\keywords{Combinations of Bernstein polynomials; Functions with
endpoint singularities; Direct and inverse results}
\subjclass{Primary 41A10, Secondary 41A17}
\begin{document}

\title[Pointwise weighted approximation of functions with endpoint singularities]{Pointwise weighted approximation of functions with endpoint singularities by combinations of Bernstein operators}

\begin{abstract}
We give direct and inverse theorems  for the weighted approximation
of functions with endpoint singularities by combinations of
Bernstein operators.
\end{abstract}
\maketitle

\section{Introduction}
The set of all continuous functions, defined on the interval $I$, is
denoted by $C(I)$. For any $f\in C([0,1])$, the corresponding
\emph{Bernstein operators} are defined as follows:
$$B_n(f,x):=\sum_{k=0}^nf(\frac{k}{n})p_{n,k}(x),$$
where \begin{align*}p_{n,k}(x):={n \choose k}x^k(1-x)^{n-k}, \
k=0,1,2,\ldots,n, \ x\in[0,1].\end{align*}
Approximation properties of Bernstein operators have been studied
very well (see \cite{Berens}, \cite{Della},
\cite{Totik}-\cite{Lorentz}, \cite{Yu}-\cite{X. Zhou}, for example).
In order to approximate the functions with singularities, Della
Vecchia et al. \cite{Della} and Yu-Zhao \cite{Yu} introduced some
kinds of \emph{modified Bernstein operators}. Throughout the paper,
$C$ denotes a positive constant independent of $n$ and $x$, which
may be different in different cases.\\
Let
\begin{align*}
w(x)=x^\alpha(1-x)^\beta,\ \alpha,\ \beta \geqslant0,\ \alpha +
\beta >0,\ 0 \leqslant x \leqslant 1.
\end{align*}
and
\begin{align*}
C_w:= \{{f \in C((0,1)) :\lim\limits_{x\longrightarrow
1}(wf)(x)=\lim\limits_{x\longrightarrow 0}(wf)(x)=0 }\}.
\end{align*}

The \emph{norm} in $C_w$ is defined by
$\|wf\|_{C_w}:=\|wf\|=\sup\limits_{0\leqslant x\leqslant
1}|(wf)(x)|$. Define
\begin{align*}
W_{w,\lambda}^{r}:= \{f \in C_w:f^{(r-1)} \in A.C.((0,1)),\
\|w\varphi^{r\lambda}f^{(r)}\|<\infty\}.
\end{align*}

For $f \in C_w$, define the \emph{weighted modulus of smoothness} by
\begin{align*}
\omega_{\varphi^\lambda}^{r}(f,t)_w:=\sup_{0<h\leqslant
t}\{\|w\triangle_{h\varphi^\lambda}^{r}f\|_{[16h^2,1-16h^2]}+\|w\overrightarrow{\triangle}_{h}^{r}f\|_{[0,16h^2]}+\|w\overleftarrow{\triangle}_{h}^{r}f\|_{[1-16h^2,1]}\},
\end{align*}
where
\begin{align*}
\Delta_{h\varphi}^{r}f(x)=\sum_{k=0}^{r}(-1)^{k}{r \choose
k}f(x+(\frac r2-k)h\varphi(x)),\\
\overrightarrow{\Delta}_{h}^{r}f(x)=\sum_{k=0}^{r}(-1)^{k}{r
\choose k}f(x+(r-k)h),\\
\overleftarrow{\Delta}_{h}^{r}f(x)=\sum_{k=0}^{r}(-1)^{k}{r \choose
k}f(x-kh),
\end{align*}
and $\varphi(x)=\sqrt{x(1-x)}$. Della Vecchia \emph{et al.} firstly
introduced $B_{n}^{\ast}(f,x)$ and ${\bar{B}}_{n}(f,x)$ in
\cite{Della}, where the properties of $B_{n}^{\ast}(f,x)$ and
${\bar{B}}_{n}(f,x)$ are studied. Among others, they prove that
\begin{align*}
\|w(f-B_{n}^{\ast}(f))\|\leqslant
C\omega_{\varphi}^{2}(f,n^{-1/2}),\ f\in C_{w},\\
\|{\bar{w}}(f-{\bar{B}_{n}(f)})\|\leqslant
\frac{C}{n^{3/2}}\sum_{k=1}^{[\sqrt{n}]}k^{2}\omega_{\varphi}^{2}(f,\frac{1}{k})_{\bar{w}}^{\ast},\
f\in C_{\bar{w}},
\end{align*}
where $w(x)=x^{\alpha}(1-x)^{\beta},\ \alpha,\ \beta\geqslant 0,\
\alpha+\beta>0,\ 0\leqslant x \leqslant1.$ In \cite{S. Yu}, for any
$\alpha,\ \beta>0,\ n\geqslant 2r+\alpha+\beta$, there hold
\begin{align*}
\|wB_{n,r}^{\ast}(f)\|\leqslant C\|wf\|,\ f\in C_{w},\\
\|w(B_{n,r}^{\ast}(f)-f)\|\leqslant \left\{
\begin{array}{lrr}
{\frac{C}{n^{r}}} (\|wf\|+\|w\varphi^{2r}f^{(2r)}\|),  f\in
W_{w}^{2r},    \\
C(\omega_{\varphi}^{2r}(f,n^{-1/2})_{w}+n^{-r}\|wf\|),   f\in C_{w}.
              \end{array}
\right.\\
\|w\varphi^{2r}B_{n,r}^{\ast(2r)}(f)\|\leqslant \left\{
\begin{array}{lrr}
Cn^{r}\|wf\|,    f\in C_{w},    \\
C(\|wf\|+\|w\varphi^{2r}f^{(2r)}\|),    f\in W_{w}^{2r}.
              \end{array}
\right.
\end{align*}
and for $0< \gamma <2r,$
$$\|w(B_{n,r}^{\ast}(f)-f)\|=O(n^{-\gamma/2}) \Longleftrightarrow
\omega_{\varphi}^{2r}(f,t)_{w}=O(t^{r}).$$
Ditzian and Totik \cite{Totik} extended this method of combinations
and defined the following combinations of Bernstein operators:
\begin{align*}
B_{n,r}(f,x):=\sum_{i=0}^{r-1}C_{i}(n)B_{n_i}(f,x).
\end{align*}
with the conditions
\begin{enumerate}[(a)]
\item $n=n_0<n_1< \cdots <n_{r-1}\leqslant
Cn,$\\
\item $\sum_{i=0}^{r-1}|C_{i}(n)|\leqslant C,$\\
\item
$\sum_{i=0}^{r-1}C_{i}(n)=1,$\\
\item $\sum_{i=0}^{r-1}C_{i}(n)n_{i}^{-k}=0$,\ for $k=1,\ldots,r-1$.
\end{enumerate}
\section{The main results}
Now, we can define our \emph{new combinations of Bernstein
operators} as follows:
\begin{align}
B^\ast_{n,r}(f,x):=B_{n,r}(F_n,x)=\sum_{i=0}^{r-1}C_{i}(n)B_{n_i}(F_{n},x),\label{s1}
\end{align}
where $C_{i}(n)$ satisfy the conditions (a)-(d). For the details, it
can be referred to \cite{S. Yu}. Our main results are the following:
\begin{thrm}\label{t1} If \ $\alpha, \ \beta >0,$ for any $f \in
C_w,$ we have
\begin{align}
\|wB^{\ast(r)}_{n,r-1}(f)\| \leqslant Cn^{r}\|wf\|.\label{s2}
\end{align}
\end{thrm}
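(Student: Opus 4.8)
The plan is to throw away the combination structure and reduce everything to a single Bernstein operator. Since
\[
B^{*}_{n,r-1}(f,x)=\sum_{i=0}^{r-2}C_i(n)B_{n_i}(F_n,x),
\]
differentiating $r$ times and using the boundedness condition $\sum_{i=0}^{r-2}|C_i(n)|\le C$ together with the growth restriction $n_i\le Cn$ gives
\[
\|wB^{*(r)}_{n,r-1}(f)\|\le\sum_{i=0}^{r-2}|C_i(n)|\,\|wB_{n_i}^{(r)}(F_n)\|,
\]
so it suffices to prove the single-operator inequality $\|wB_{m}^{(r)}(F_n)\|\le Cm^{r}\|wf\|$ for each $m=n_i$. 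Note that the moment conditions (c)--(d) on the coefficients play no role in this particular estimate, since each summand is bounded in absolute value; only (a) and (b) are used.

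Next I would insert the classical derivative formula
\[
B_m^{(r)}(F_n,x)=\frac{m!}{(m-r)!}\sum_{k=0}^{m-r}\Delta_{1/m}^{r}F_n\Big(\frac km\Big)\,p_{m-r,k}(x),
\qquad \Delta_{1/m}^{r}F_n\Big(\frac km\Big)=\sum_{j=0}^{r}(-1)^{r-j}\binom rj F_n\big(\tfrac{k+j}{m}\big).
\]
The prefactor obeys $\frac{m!}{(m-r)!}\le m^{r}\le Cn^{r}$, so the whole theorem reduces to the weighted sum estimate
\[
w(x)\sum_{k=0}^{m-r}\Big|\Delta_{1/m}^{r}F_n\Big(\frac km\Big)\Big|\,p_{m-r,k}(x)\le C\|wf\|.
\]
Here I would record the two structural facts about the modification $F_n$ from \cite{S. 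Yu}: first, $\|wF_n\|\le C\|wf\|$, and second, on the endpoint strips $[0,\delta/n]$ and $[1-\delta/n,1]$ the function $F_n$ equals a smooth (polynomial) extension so that $F_n(0)$, $F_n(1/m)$, etc., are well defined and uniformly controlled in the weighted norm.

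The heart of the argument is then a case analysis on $k$. For the \emph{middle} indices (those with $k/m$ bounded away from $0$ and $1$) one has $F_n=f$ on the stencil, so $|\Delta_{1/m}^{r}F_n(k/m)|\le C\max_j|F_n((k+j)/m)|\le C\|wf\|/w(k/m)$, and the estimate is closed by the standard Bernstein-basis weight-transfer bound $w(x)\sum_{k}w(k/m)^{-1}p_{m-r,k}(x)\le C$ restricted to this range, which holds because $p_{m-r,k}(x)$ concentrates at $k/m\approx x$, where $w(x)/w(k/m)\approx1$. For the \emph{endpoint} indices I would use the explicit extension: the $r$-th differences of the polynomial part stay bounded, and the vanishing of $w(x)=x^{\alpha}(1-x)^{\beta}$ for $x$ near the endpoints absorbs the remaining factors (here $\alpha,\beta>0$ is exactly what is needed).

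The main obstacle I anticipate is precisely the endpoint analysis, and within it the \emph{transition} indices $k$ for which the stencil $\{(k+j)/m:0\le j\le r\}$ straddles the boundary $x\approx\delta/n$ between the modified region and the region where $F_n=f$; since $n\le m\le Cn$, the step $1/m$ is comparable to the width of the modification, so only finitely many such $k$ occur near each endpoint. There the difference mixes values of the extension with genuine (possibly singular) values of $f$, and one cannot bound each term separately by $\|wf\|/w(\cdot)$. Handling this will require the matching (continuity of $F_n$ and of the relevant derivatives across the boundary) built into the definition of $F_n$, to show the difference behaves no worse than in the interior, after which the smallness of $w(x)$ near the endpoints finishes the estimate. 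The remaining steps — summing the geometric-type Bernstein tails and verifying $w(x)/w(k/m)\le C$ on the concentration range of $p_{m-r,k}$ — I expect to be routine.
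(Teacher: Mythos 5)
Your outline is, in substance, the argument the paper runs near the left endpoint, and it is viable; but note that the paper actually splits on $x$ rather than on $k$ alone. For $x\in(0,1/n)$ it uses exactly your forward-difference representation with the splitting $H_1+H_2+H_3$ (the terms $k=0$, $k=n_i-r$, and $1\le k\le n_i-r-1$), bounding the middle range term by term via $|F_n(\frac{k+r-j}{n_i})|\le\|wF_n\|w^{-1}(\frac{k+r-j}{n_i})$, the ratio estimate $w(\frac{k}{n_i-r})/w(\frac{k+r-j}{n_i})\le C$, Zhou's inequality (\ref{s5}), and $\|wF_n\|\le C\|wf\|$. For $x\in[1/n,1/2]$, however, it switches tools entirely: it uses the Ditzian--Totik derivative representation with the factors $Q_j(x,n_i)=(n_ix(1-x))^{[(r-j)/2]}$, then Cauchy's inequality together with the moment bound (\ref{s6}) and the weighted sum bound (\ref{s17}), exploiting $\varphi^2(x)\ge c/n$ in that range. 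Your plan to run the single forward-difference formula uniformly in $x$ does in fact work: for $k=0$ one gets $w(x)p_{m-r,0}(x)w^{-1}(1/m)\le C(mx)^{\alpha}(1-x)^{m-r+\beta}\le C$ for \emph{all} $x$, not only $x<1/n$, and the middle-range estimate via (\ref{s5}) is likewise global. So your route is somewhat more uniform than the paper's two-case proof, and you are also right that only conditions (a)--(b) on the $C_i(n)$ matter here.

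The one genuine defect is the step you yourself flag as the main obstacle, and it is misdiagnosed. The ``transition'' indices whose stencil straddles the blending zone of $F_n$ pose no difficulty: for every $k$ with $1\le k\le m-r-1$, all stencil points $(k+j)/m$ lie in $[1/m,\,1-1/m]$, where the crude term-by-term bound $|F_n(\frac{k+j}{m})|\le\|wF_n\|w^{-1}(\frac{k+j}{m})\le C\|wf\|w^{-1}(\frac{k+j}{m})$ is available regardless of whether $F_n$ equals $f$, the interpolating polynomial, or a blend --- only function values enter the differences, so no matching of derivatives across the boundary is needed, and the claim that ``one cannot bound each term separately'' there is false. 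The only genuinely special terms are $k=0$ and $k=m-r$, where the stencil contains the endpoint itself and $w^{-1}(0)=\infty$; there one must use that $F_n(0)=L_r(f,0)$ is a value of the Lagrange interpolant at the nodes $i/n$, whence $|F_n(0)|\le C\|wf\|w^{-1}(1/n)$, which is precisely the paper's $H_1$ estimate (and symmetrically $H_2$ at the right endpoint, where $\beta>0$ enters). Replace your appeal to continuity of derivatives across the blending boundary by the term-by-term bound plus (\ref{s5}) for the middle indices and by the $H_1/H_2$ computation for the two boundary terms, and your proof closes; as written, the decisive endpoint estimate is announced but not executed.
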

\begin{thrm}\label{t2} For any $\alpha, \ \beta >0,\ 0
\leqslant \lambda \leqslant 1,$ we have
\begin{align}
|w(x)\varphi^{r\lambda}(x)B^{\ast(r)}_{n,r-1}(f,x)|\leqslant \left\{
\begin{array}{lrr}
Cn^{r/2}{\{max\{n^{r(1-\lambda)/2},\varphi^{r(\lambda-1)}(x)\}\}}\|wf\|,  &&  f\in C_w,    \\
C\|w\varphi^{r\lambda}f^{(r)}\|,&& f\in W_{w,\lambda}^{r}.\label{s3}
              \end{array}
\right.
\end{align}
\end{thrm}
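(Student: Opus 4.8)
\emph{Proof proposal.} Both inequalities rest on the explicit representation of the $r$-th derivative of a single Bernstein operator in terms of $r$-th order forward differences,
\begin{align*}
B_{m}^{(r)}(g,x)=\frac{m!}{(m-r)!}\sum_{k=0}^{m-r}\Delta_{1/m}^{r}g\Big(\tfrac{k}{m}\Big)\,p_{m-r,k}(x),
\end{align*}
applied to $g=F_{n}$ with $m=n_{i}$ and summed against the coefficients $C_{i}(n)$. Since $\sum_{i}|C_{i}(n)|\leqslant C$ and $n\leqslant n_{i}\leqslant Cn$, it suffices to bound each $B_{n_i}^{(r)}(F_n,\cdot)$ by the right-hand sides and then add; the combination conditions (a)--(d) will only be invoked through the uniform control of the $n_{i}$ and the coefficients. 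The two standing tools will be the Bernstein moment estimates $\sum_{k}\big(\tfrac{k}{m}-x\big)^{2s}p_{m,k}(x)\leqslant C\big(\varphi^{2}(x)/m\big)^{s}$ and the weight-ratio estimates $w(x)/w(\xi)\leqslant C$ valid for nodes $\xi$ within $O\big(\varphi(x)/\sqrt{n}+1/n\big)$ of $x$; the latter uses $\alpha,\beta>0$ together with the endpoint modification built into $F_{n}$, which keeps $\|wF_{n}\|\leqslant C\|wf\|$ and lets $F_{n}^{(r)}$ coincide with $f^{(r)}$ on the central block.

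For the first estimate ($f\in C_{w}$) I would factor $w(x)\varphi^{r\lambda}(x)$ into the difference sum and split $[0,1]$ into the central range $\varphi(x)\geqslant n^{-1/2}$ and the two endpoint ranges $\varphi(x)<n^{-1/2}$. In the central range the crude bound $|\Delta_{1/n_i}^{r}F_n(k/n_i)|\leqslant C\|wf\|/\min_j w(\xi_j)$ together with a Cauchy--Schwarz distribution of the $r$ difference steps against the variance estimate converts the factorial growth $n_i!/(n_i-r)!\sim n^{r}$ and the inserted $\varphi^{r\lambda}$ into the scale $n^{r/2}\cdot n^{r(1-\lambda)/2}$. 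In the endpoint ranges, where $\varphi^{r(\lambda-1)}(x)$ is the dominant factor, I would instead use that $B_{n_i}^{(r)}(F_n,\cdot)$ is a polynomial of bounded degree and the boundedness of $n_i$ to produce the factor $\varphi^{r(\lambda-1)}(x)$. The two cases assemble into the stated maximum.

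For the second estimate ($f\in W_{w,\lambda}^{r}$) I would replace the difference by its integral form,
\begin{align*}
\Delta_{1/m}^{r}F_{n}\Big(\tfrac{k}{m}\Big)=\int_{0}^{1/m}\!\!\cdots\!\int_{0}^{1/m}f^{(r)}\Big(\tfrac{k}{m}+u_{1}+\cdots+u_{r}\Big)\,du_{1}\cdots du_{r},
\end{align*}
so that $w(x)\varphi^{r\lambda}(x)B_{m}^{(r)}(F_{n},x)$ becomes $w(x)\varphi^{r\lambda}(x)\tfrac{m!}{(m-r)!}\sum_{k}p_{m-r,k}(x)\int_{[0,1/m]^{r}}f^{(r)}$. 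Writing $|f^{(r)}(\eta)|\leqslant\|w\varphi^{r\lambda}f^{(r)}\|/(w\varphi^{r\lambda})(\eta)$ and bounding the ratio $(w\varphi^{r\lambda})(x)/(w\varphi^{r\lambda})(\eta)$ for $\eta=\tfrac{k}{m}+\sum_{j}u_{j}$ close to $x$, the normalization $\tfrac{m!}{(m-r)!}(1/m)^{r}\leqslant C$ absorbs the factorial against the integration volume, and $\sum_{k}p_{m-r,k}(x)=1$ closes the estimate at $C\|w\varphi^{r\lambda}f^{(r)}\|$.

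The main obstacle is uniform control, near the degenerate endpoints, of the weight ratios $w(x)/w(\xi)$ and $(w\varphi^{r\lambda})(x)/(w\varphi^{r\lambda})(\eta)$: this is exactly where $\alpha,\beta>0$ and the definition of $F_{n}$ must be used, and where the endpoint blocks ($k$ near $0$ or $m$) have to be separated from the interior and handled by the polynomial bound rather than the moment estimate. The second delicate point is verifying that the central-range Cauchy--Schwarz argument produces \emph{precisely} the exponent $r(1-\lambda)/2$ and not a larger power; once these ratio bounds and the exponent bookkeeping are in place, the remainder is a routine application of the standard Bernstein moment estimates.
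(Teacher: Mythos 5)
Your proposal has genuine gaps in both branches, and in both places the paper uses a tool you never invoke. For the first branch ($f\in C_w$), your central-range mechanism cannot work as described: with $f$ merely continuous, the crude bound $|\Delta_{1/n_i}^{r}F_n(k/n_i)|\leqslant C\|wf\|/w(\xi)$ fed into the forward-difference representation gives $n^r\|wf\|w^{-1}(x)$ and nothing better --- there is no moment factor $(x-k/n_i)^j$ present against which Cauchy--Schwarz and the variance estimate could act, so no power of $n$ can be shaved off the factorial $n_i!/(n_i-r)!\sim n^r$. The paper instead uses, for $\varphi(x)>n^{-1/2}$, the Ditzian--Totik derivative representation $B_{n_i}^{(r)}(F_n,x)=(\varphi^2(x))^{-r}\sum_{j=0}^{r}Q_j(x,n_i)n_i^{j}\sum_k(x-\frac{k}{n_i})^{j}F_n(\frac{k}{n_i})p_{n_i,k}(x)$ with $Q_j(x,n_i)=(n_ix(1-x))^{[(r-j)/2]}$; it is this identity, combined with Cauchy's inequality, the moment bound (\ref{s6}) and the weighted-sum bound (\ref{s17}), that produces $Cn^{r/2}\varphi^{r(\lambda-1)}(x)\|wf\|$. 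Your endpoint-range argument is simply false as stated: $B_{n_i}^{(r)}(F_n,\cdot)$ has degree $n_i-r\to\infty$, and the $n_i$ are not bounded. What the paper does for $\varphi(x)\leqslant n^{-1/2}$ is invoke Theorem \ref{t1} (inequality (\ref{s2}), which your proposal never uses) together with $\varphi^{r\lambda}(x)\leqslant n^{-r\lambda/2}$, yielding $Cn^{r(1-\lambda/2)}\|wf\|=Cn^{r/2}\,n^{r(1-\lambda)/2}\|wf\|$; note the two regions are assigned to the two terms of the maximum in exactly the opposite way from your plan.

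For the second branch ($f\in W_{w,\lambda}^{r}$), your overall strategy (difference representation plus integral form of the differences) matches the paper's, but two steps are missing. First, the integral form involves $F_n^{(r)}$, not $f^{(r)}$: $F_n$ agrees with $f$ only on $[2/n,1-2/n]$, and on the endpoint blocks one needs Lemma 3.3, i.e.\ $\|w\varphi^{r\lambda}F_n^{(r)}\|\leqslant C\|w\varphi^{r\lambda}f^{(r)}\|$ (inequality (\ref{s7})), whose proof is itself nontrivial; writing $f^{(r)}$ under the integral sign skips this. Second, your closing step via $\sum_k p_{m-r,k}(x)=1$ presumes a uniformly bounded ratio $(w\varphi^{r\lambda})(x)/(w\varphi^{r\lambda})(\eta)$, but the sum runs over \emph{all} $k$, including nodes far from $x$ and near the endpoints where this ratio is unbounded. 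The correct tool is the Jacobi-weight lemma of Zhou (inequality (\ref{s5})), which bounds the full weighted sum $\sum_k(w\varphi^{r\lambda})^{-1}(\frac{k^\ast}{n_i})p_{n_i-r,k}(x)\leqslant C(w\varphi^{r\lambda})^{-1}(x)$ as a summed, not pointwise, estimate; moreover the terms $k=0$ and $k=n_i-r$ require the separate difference estimates (\ref{s23})--(\ref{s24}) with the extra factors $u^{r-1}$ and $(1-u)^{r/2}$, precisely because $(w\varphi^{r\lambda})^{-1}$ blows up at the endpoints. Without (\ref{s7}), (\ref{s5}), and the endpoint-term treatment, the second estimate does not close.
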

\begin{thrm}\label{t3} For $f\in C_w,\ \alpha, \ \beta>0,\ \alpha_0 \in(0,r), \ 0
\leqslant \lambda \leqslant 1,$ we have
\begin{align}
w(x)|f(x)-B^\ast_{n,r-1}(f,x)|=O((n^{-{\frac
12}}\varphi^{-\lambda}(x)\delta_n(x))^{\alpha_0})
\Longleftrightarrow
\omega_{\varphi^\lambda}^r(f,t)_w=O(t^{\alpha_0}).\label{s4}
\end{align}
\end{thrm}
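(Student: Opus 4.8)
The plan is to reduce the stated equivalence \eqref{s4} to two one-directional estimates connecting the approximation error to the weighted $K$-functional
\[
K_{r,\lambda}(f,t^r)_w := \inf_{g\in W_{w,\lambda}^r}\{\|w(f-g)\| + t^r\|w\varphi^{r\lambda}g^{(r)}\|\},
\]
and then to invoke the (standard) equivalence $\omega_{\varphi^\lambda}^r(f,t)_w \sim K_{r,\lambda}(f,t^r)_w$, which I would establish first as a separate realization lemma so that the modulus on the right-hand side of \eqref{s4} can be replaced throughout by the $K$-functional. All of the remaining work is then carried out with the $K$-functional, whose behaviour under $B^\ast_{n,r-1}$ is governed by Theorems \ref{t1} and \ref{t2}.

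For the direct implication ($\Leftarrow$), I would fix $x$ and $n$, set $t\sim n^{-1/2}\varphi^{-\lambda}(x)\delta_n(x)$, and choose a near-optimal $g=g_{n,x}\in W_{w,\lambda}^r$ so that $\|w(f-g)\|+t^r\|w\varphi^{r\lambda}g^{(r)}\|\le Ct^{\alpha_0}$. Splitting
\[
w(x)|f-B^\ast_{n,r-1}(f)| \le w(x)|f-g| + w(x)|B^\ast_{n,r-1}(f-g,x)| + w(x)|g-B^\ast_{n,r-1}(g,x)|,
\]
I would bound the first two terms by $\|w(f-g)\|$ using the uniform boundedness of Theorem \ref{t1}, and bound the last term by $Ct^r\|w\varphi^{r\lambda}g^{(r)}\|$ via a Taylor expansion of $g$ combined with the smooth derivative estimate (second line) of Theorem \ref{t2}. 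The delicate point is that the factor converting $g^{(r)}$ into the prefactor $t^r$ must reproduce exactly the step-weight $n^{-r/2}\varphi^{-r\lambda}(x)\delta_n^r(x)$, which forces a separate treatment of the interior region $\varphi^2(x)\ge 1/n$ and the two boundary regions $\varphi^2(x)<1/n$, where $\delta_n(x)$ is comparable to $\varphi(x)$ and to $n^{-1/2}$ respectively.

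For the inverse implication ($\Rightarrow$), I would dominate the $K$-functional by the error through a telescoping argument built on the Bernstein-type inequality of Theorem \ref{t2}. Starting from
\[
K_{r,\lambda}(f,t^r)_w \le \|w(f-B^\ast_{n,r-1}(f))\| + t^r\|w\varphi^{r\lambda}B^{\ast(r)}_{n,r-1}(f)\|,
\]
I would control $\|w\varphi^{r\lambda}B^{\ast(r)}_{n,r-1}(f)\|$ by inserting intermediate operators $B^\ast_{m,r-1}(f)$ with $m<n$ and applying the crude growth bound $Cn^{r/2}\max\{\cdots\}\|wf\|$ of Theorem \ref{t2} to the differences, the hypothesised rate converting each occurrence of $\|w(\cdot)\|$ into a power of the step-weight. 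Summing over a geometric sequence of indices $n_j\sim 2^j$ yields a recursive inequality of Berens--Lorentz type for $\Phi(t):=K_{r,\lambda}(f,t^r)_w$, roughly $\Phi(t)\le C\{(t/s)^r\Phi(s)+s^{\alpha_0}\}$ for $0<t\le s$; since $0<\alpha_0<r$, the Berens--Lorentz lemma then gives $\Phi(t)=O(t^{\alpha_0})$, i.e.\ $\omega_{\varphi^\lambda}^r(f,t)_w=O(t^{\alpha_0})$.

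The main obstacle I anticipate is the inverse direction, specifically making the telescoping/Berens--Lorentz iteration compatible with the pointwise, $x$-dependent rate $(n^{-1/2}\varphi^{-\lambda}(x)\delta_n(x))^{\alpha_0}$ rather than a single global rate. Near the endpoints the singular weight $w$ and the vanishing of $\varphi$ interact, so the crude growth factor $n^{r/2}\max\{n^{r(1-\lambda)/2},\varphi^{r(\lambda-1)}(x)\}$ of Theorem \ref{t2} must be summed uniformly in $x$ across the interior and boundary regimes while preserving the exponent $\alpha_0$; this is where I expect the bulk of the technical effort to lie. I also expect the realization of the $K$-functional at the boundary, ensuring the infimum is attained by admissible $g$ with the correct endpoint behaviour, to require care.
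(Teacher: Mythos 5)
Your direct half follows the paper's route in outline (near-best $g$ via the $K$-functional, three-term splitting, boundedness on $f-g$, pointwise estimate on the smooth part), with two caveats: the uniform boundedness $\|wB^\ast_{n,r-1}(f)\|\leqslant C\|wf\|$ is Lemma \eqref{s13} (quoted from \cite{S. Yu}), not Theorem \ref{t1}, which is a Bernstein-type \emph{derivative} inequality; and since $B^\ast_{n,r-1}(f,x)=B_{n,r-1}(F_n,x)$ with $f$ replaced near the endpoints by the interpolation polynomials $L_r,R_r$, the smooth-function estimate cannot be run on $g$ alone --- one needs \eqref{s7} to pass from $G_n^{(r)}$ back to $g^{(r)}$ and \eqref{s10}--\eqref{s11} for the boundary strips $[0,\frac{2}{n}]$, $[1-\frac{2}{n},1]$, none of which appears in your sketch. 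Also, the pointwise factor $(\delta_n(x)/(\sqrt{n}\varphi^\lambda(x)))^r$ is obtained in the paper not from the second line of Theorem \ref{t2} but from Taylor's remainder together with the vanishing moments \eqref{s28} and a Cauchy--Schwarz argument using \eqref{s5}, \eqref{s6} and \eqref{s30}; Theorem \ref{t2} by itself is a norm bound on $w\varphi^{r\lambda}B^{\ast(r)}_{n,r-1}(g)$ and does not produce the $x$-dependent rate.

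The genuine gap is in your inverse direction. Your telescoping scheme starts from $K_{r,\lambda}(f,t^r)_w\leqslant\|w(f-B^\ast_{n,r-1}(f))\|+t^r\|w\varphi^{r\lambda}B^{\ast(r)}_{n,r-1}(f)\|$, i.e.\ it needs a \emph{global norm} rate for the error. But the hypothesis of \eqref{s4} is pointwise, and for $\lambda>0$ it yields no norm rate at all: near the endpoints $\delta_n(x)=n^{-1/2}$ while $\varphi^{-\lambda}(x)\to\infty$, so $\sup_{x}\bigl(n^{-1/2}\varphi^{-\lambda}(x)\delta_n(x)\bigr)^{\alpha_0}=\sup_x\bigl(n^{-1}\varphi^{-\lambda}(x)\bigr)^{\alpha_0}$ is unbounded. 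Hence the first line of your iteration cannot be fed, and no choice of dyadic intermediate operators repairs this --- global-norm telescoping is structurally incompatible with an $x$-dependent rate, which is precisely why you could not see how to ``make the iteration compatible'' with it. The paper never forms a norm of the error: it fixes $x$ and $h$, decomposes $\Delta^r_{h\varphi^\lambda}f$ as in \eqref{s36}, uses the hypothesis only at the $r+1$ shifted points (term $J_1$), writes the differences of $B^\ast_{n,r-1}(f-g)$ and $B^\ast_{n,r-1}(g)$ as $r$-fold integrals of $B^{\ast(r)}_{n,r-1}$, bounds these pointwise by Theorem \ref{t1} and both lines of Theorem \ref{t2} (the $C_w$ line applied with $\lambda=1$) together with Lemma \eqref{s14}, and then chooses $n=n(x,\delta)$ so that $n^{-1/2}\delta_n(x)\leqslant\delta<(n-1)^{-1/2}\delta_{n-1}(x)$. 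The Berens--Lorentz lemma is then applied to the \emph{main-part} modulus $\Omega^r_{\varphi^\lambda}(f,\cdot)_w$, and $\omega^r_{\varphi^\lambda}(f,t)_w=O(t^{\alpha_0})$ is recovered only afterwards through the Marchaud-type relations \eqref{s33}--\eqref{s34} --- a layer your proposal omits entirely, since your recursion is phrased directly for the $K$-functional. To salvage your approach you would in effect have to localize the telescoping in $x$, at which point it collapses into the paper's pointwise argument.
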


\section{Lemmas}
\begin{lem}(\cite{Zhou}) For any non-negative real $u$ and $v$, we
have
\begin{align}
\sum_{k=1}^{n-1}({\frac kn})^{-u}(1-{\frac
kn})^{-v}p_{n,k}(x)\leqslant Cx^{-u}(1-x)^{-v}.\label{s5}
\end{align}
\end{lem}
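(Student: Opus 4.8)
The plan is to decouple the two singular factors and reduce the whole statement to a single one-sided estimate. Since $\sum_k p_{n,k}(x)=1$, the family $\{p_{n,k}(x)\}_k$ is a probability weight, and Hölder's inequality with conjugate exponents $2,2$ gives
\[
\sum_{k=1}^{n-1}\Big(\tfrac kn\Big)^{-u}\Big(1-\tfrac kn\Big)^{-v}p_{n,k}(x)\le\Big(\sum_{k=1}^{n-1}\Big(\tfrac kn\Big)^{-2u}p_{n,k}(x)\Big)^{1/2}\Big(\sum_{k=1}^{n-1}\Big(1-\tfrac kn\Big)^{-2v}p_{n,k}(x)\Big)^{1/2}.
\]
Thus it suffices to establish the one-sided bound
\[
\sum_{k=1}^{n-1}\Big(\tfrac kn\Big)^{-s}p_{n,k}(x)\le C\,x^{-s},\qquad s\ge 0,
\]
which I will call $(\ast)$. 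Granting $(\ast)$ with $s=2u$, the first factor is $\le Cx^{-u}$; for the second factor I would use the symmetry $k\mapsto n-k$, $x\mapsto 1-x$ together with $p_{n,n-k}(x)=p_{n,k}(1-x)$, which turns it into $(\ast)$ evaluated at $1-x$ with $s=2v$, hence $\le C(1-x)^{-v}$. Multiplying the two factors yields \eqref{s5}.

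To prove $(\ast)$ I would split the range of summation at the midpoint between $0$ and the binomial mean $nx$. On the upper range $k\ge nx/2$ one has $(k/n)^{-s}\le (x/2)^{-s}=2^{s}x^{-s}$, so this part is at most $2^{s}x^{-s}\sum_k p_{n,k}(x)\le 2^{s}x^{-s}$, which is already of the required order. All of the real content therefore sits in the lower tail $1\le k<nx/2$, where $(k/n)^{-s}$ can be as large as $n^{s}$ while the weights $p_{n,k}(x)$ are exponentially small; the whole point is that the decay of the weights must overcome this growth.

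For that tail I would use the crude bound $(k/n)^{-s}\le n^{s}$ (valid since $k\ge 1$) together with a large-deviation estimate for the binomial law, namely $\sum_{1\le k<nx/2}p_{n,k}(x)=\mathrm{Pr}[\mathrm{Bin}(n,x)<nx/2]\le e^{-cnx}$ for an absolute constant $c>0$ (Chernoff's inequality). Then the tail is bounded by $n^{s}e^{-cnx}=x^{-s}\,(nx)^{s}e^{-cnx}\le C\,x^{-s}$, because $\sup_{t\ge 0}t^{s}e^{-ct}<\infty$; note also that when $nx<2$ the tail range is empty, so nothing needs to be estimated there. Combining the two ranges gives $(\ast)$. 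An alternative to invoking Chernoff is to estimate the ratios $p_{n,k+1}(x)/p_{n,k}(x)=\tfrac{n-k}{k+1}\cdot\tfrac{x}{1-x}$, which exceed $2$ throughout $k<nx/2$, so the tail is geometrically dominated by its top term, reducing matters to bounding a single binomial coefficient near $k\approx nx/2$.

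The main obstacle is exactly this tail estimate: one must show that the exponential decay of $p_{n,k}(x)$ away from the mean beats the polynomial-in-$n$ growth of $(k/n)^{-s}$, and do so \emph{uniformly} in $n$ and $x$. The observation that unlocks this is that the natural variable is $t=nx$, so that after the rescaling $n^{s}e^{-cnx}=x^{-s}(nx)^{s}e^{-cnx}$ all dependence on $n$ and $x$ collapses into the single quantity $(nx)^{s}e^{-cnx}$; its boundedness over $t\ge 0$ is what produces a constant $C$ depending only on $u,v$, as required.
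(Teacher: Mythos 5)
Your proof is correct, and it is genuinely self-contained, which the paper is not: the paper states the lemma with no argument at all, importing it from \cite{Zhou}. Each step of yours checks out. The Cauchy--Schwarz decoupling is legitimate because $\sum_k p_{n,k}(x)\leqslant 1$; the symmetry reduction uses the correct identity $p_{n,n-k}(x)=p_{n,k}(1-x)$ with $1-k/n=(n-k)/n$; and the one-sided bound $(\ast)$ is sound: on $k\geqslant nx/2$ the factor $(k/n)^{-s}\leqslant 2^s x^{-s}$ is immediate, while on $1\leqslant k<nx/2$ (empty unless $nx\geqslant 2$) the multiplicative Chernoff bound $\mathrm{Pr}[\mathrm{Bin}(n,x)\leqslant nx/2]\leqslant e^{-nx/8}$ holds with an absolute constant, and the rescaling $n^s e^{-cnx}=x^{-s}(nx)^s e^{-cnx}\leqslant x^{-s}\sup_{t\geqslant 0}t^s e^{-ct}$ indeed collapses all $(n,x)$-dependence, yielding a constant depending only on $s$, hence on $u,v$ after decoupling -- exactly the uniformity the lemma demands, and for arbitrary real $u,v\geqslant 0$, not just integers. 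Two small remarks. First, in the geometric-ratio aside, the inequality $p_{n,k+1}(x)/p_{n,k}(x)=\frac{n-k}{k+1}\cdot\frac{x}{1-x}\geqslant\frac{2-x}{1-x}\geqslant 2$ requires $k+1\leqslant nx/2$; this does hold for all consecutive pairs inside your summation range, but the reduction then still leaves you with bounding the single term $n^s p_{n,k}(x)$ at $k\approx nx/2$, which again needs exponential smallness in $nx$ -- so the Chernoff route is not just an alternative but the cleaner completion, and you were right to make it the main line. Second, the classical proofs of (\ref{s5}) in the Jacobi-weight literature (as in \cite{Zhou}) typically handle integer exponents via inverse-moment identities for binomial coefficients, e.g.\ $\frac{n+1}{k+1}\binom{n}{k}=\binom{n+1}{k+1}$, and then interpolate to real exponents by H\"older; your probabilistic argument replaces that machinery by a single large-deviation estimate, at the mild cost of a non-explicit constant, and buys a shorter and arguably more transparent proof.
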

\begin{lem}(\cite{Della}) If $\gamma \in R,$ then
\begin{align}
\sum_{k=0}^n|k-nx|^\gamma p_{n,k}(x) \leqslant Cn^{\frac
\gamma2}\varphi^\gamma(x).\label{s6}
\end{align}
\end{lem}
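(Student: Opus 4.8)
The plan is to read the sum probabilistically and reduce the arbitrary real exponent to even integer moments. Writing $\varphi^2(x)=x(1-x)$ and regarding $\{p_{n,k}(x)\}_{k=0}^n$ as the law of a $\mathrm{Binomial}(n,x)$ variable, the left-hand side
\[
S_{n,\gamma}(x):=\sum_{k=0}^n|k-nx|^\gamma p_{n,k}(x)
\]
is the $\gamma$-th absolute central moment, and the whole argument is anchored on the exact identity $\sum_{k=0}^n(k-nx)^2p_{n,k}(x)=n\varphi^2(x)$.

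First I would settle the range $0\le\gamma\le 2$ directly. Since $\{p_{n,k}(x)\}_k$ is a probability vector and $t\mapsto t^{\gamma/2}$ is concave on $[0,\infty)$ for such $\gamma$, Jensen's inequality gives
\[
S_{n,\gamma}(x)=\sum_{k=0}^n\big((k-nx)^2\big)^{\gamma/2}p_{n,k}(x)\le\Big(\sum_{k=0}^n(k-nx)^2p_{n,k}(x)\Big)^{\gamma/2}=\big(n\varphi^2(x)\big)^{\gamma/2}=n^{\gamma/2}\varphi^\gamma(x),
\]
which is \eqref{s6} with $C=1$, uniformly in $x$. For $\gamma>2$ I would reduce to an even exponent: choosing the even integer $2m\ge\gamma$, the Lyapunov (power-mean) inequality for a probability measure yields $\big(S_{n,\gamma}\big)^{1/\gamma}\le\big(S_{n,2m}\big)^{1/(2m)}$, hence $S_{n,\gamma}\le\big(S_{n,2m}\big)^{\gamma/(2m)}$. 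It therefore suffices to prove the even-moment bound $T_{n,2m}(x):=\sum_{k=0}^n(k-nx)^{2m}p_{n,k}(x)\le C_m\,(n\varphi^2(x))^m$, since raising this to the power $\gamma/(2m)$ reproduces $n^{\gamma/2}\varphi^\gamma(x)$.

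The even moments are handled by the classical derivative recurrence. Using $p_{n,k}'(x)=\frac{k-nx}{x(1-x)}p_{n,k}(x)$ inside $T_{n,j}$ produces
\[
T_{n,j+1}(x)=\varphi^2(x)\big[T_{n,j}'(x)+n\,j\,T_{n,j-1}(x)\big],\qquad T_{n,0}=1,\ T_{n,1}=0,
\]
and inducting on $j$ shows $T_{n,2m}$ is a polynomial of degree $m$ in $n$ whose top contribution is exactly $\frac{(2m)!}{2^mm!}(n\varphi^2)^m$, the remaining terms all carrying strictly fewer powers of $n$.

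The main obstacle is precisely the control of those lower-order terms near the endpoints, where $\varphi(x)\to0$. The recurrence generates, alongside $(n\varphi^2)^m$, pieces of the shape $n^j\varphi^{2\ell}$ with $j<m$; in the bulk region $n\varphi^2(x)\ge 1$ each of these is dominated by $(n\varphi^2)^m$ (using $\varphi^2\le\tfrac14$ and $n\varphi^2\ge1$), and the clean bound $T_{n,2m}\le C_m(n\varphi^2)^m$—hence \eqref{s6}—follows there. The delicate case is the boundary layer $n\varphi^2(x)<1$, where for instance the $m=2$ residue $n\varphi^2(1-6\varphi^2)$ is of the same order as $n\varphi^2$ rather than $(n\varphi^2)^2$; I would treat the regions $n\varphi^2(x)\ge1$ and $n\varphi^2(x)<1$ separately, using the explicit closed forms from the recurrence and the exactness $T_{n,2}=n\varphi^2$ as comparison in the latter, and I expect this endpoint bookkeeping—not the algebra of the recurrence—to be where the genuine care is required. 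Odd and negative values of $\gamma$ are recovered by the same monotonicity device, the negative-order sum being understood away from the resonance $nx\in\mathbb{Z}$.
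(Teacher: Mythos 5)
The paper does not prove this lemma --- it quotes it from \cite{Della} --- so your proposal can only be judged on its own merits, not against an in-paper argument. Several ingredients are sound: Jensen's inequality does give \eqref{s6} with $C=1$ for $0\leqslant\gamma\leqslant 2$; the Lyapunov reduction of an arbitrary positive $\gamma$ to an even moment $2m\geqslant\gamma$ is legitimate; the recurrence $T_{n,j+1}=\varphi^2\bigl(T_{n,j}'+njT_{n,j-1}\bigr)$ is correct, as is the leading term $\frac{(2m)!}{2^m m!}(n\varphi^2)^m$; and in the bulk $n\varphi^2(x)\geqslant 1$ every residual monomial $n^j\varphi^{2\ell}$ ($j<m$, $\ell\geqslant j$) is indeed dominated by $(n\varphi^2)^m$, so your argument closes there for all $\gamma\geqslant 0$. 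But the step you defer as ``endpoint bookkeeping'' is not bookkeeping: in the boundary layer $n\varphi^2(x)<1$ the inequality $T_{n,2m}\leqslant C_m(n\varphi^2)^m$ is \emph{false}, and so is \eqref{s6} itself for $\gamma>2$ if $x$ ranges over all of $[0,1]$. Your own $m=2$ computation shows it: $T_{n,4}=3n^2\varphi^4+n\varphi^2(1-6\varphi^2)$, and taking e.g. $x=n^{-2}$ gives left-hand side $\asymp nx=n^{-1}$ against right-hand side $Cn^2x^2=Cn^{-2}$, so the ratio is unbounded. No splitting of regions can rescue the claim; what is true (and what the source \cite{Della} actually asserts, matching how \eqref{s6} is used in this paper, namely only for $x\in[\frac1n,\frac12]$ where $n\varphi^2(x)\geqslant\frac12$) is the bound restricted to $x\in[\frac1n,1-\frac1n]$, equivalently the uniform estimate with $(n\varphi^2(x)+1)^{\gamma/2}$ on the right. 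You should state that restriction; with it, your proof of the case $\gamma\geqslant0$ is complete.

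The final sentence on negative $\gamma$ is also wrong in substance. For $\gamma<0$ the power-mean monotonicity you invoke runs the other way: from $\bigl(E|Y|^{\gamma}\bigr)^{1/\gamma}\leqslant\bigl(EY^2\bigr)^{1/2}$, raising to the negative power $\gamma$ \emph{reverses} the inequality and yields only the lower bound $S_{n,\gamma}\geqslant(n\varphi^2)^{\gamma/2}$, not the upper bound \eqref{s6}. An upper estimate for negative moments needs a genuinely different mechanism --- typically a dyadic decomposition over shells $2^{j}\sqrt{n}\varphi(x)\leqslant|k-nx|<2^{j+1}\sqrt{n}\varphi(x)$ combined with exponential tail bounds for $\sum_{|k-nx|\geqslant t\sqrt{n}\varphi}p_{n,k}(x)$ --- and, in addition, the statement as literally written cannot hold with a uniform constant: if $nx$ is an integer the term $k=nx$ is infinite, and even ``away from resonance'' the single term $|k_0-nx|^{\gamma}p_{n,k_0}(x)$ blows up as $nx\to k_0$ while the right-hand side stays bounded, so excluding exact integers does not restore uniformity in $x$. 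Negative $\gamma$ therefore requires either a reformulation (for instance with $1+|k-nx|$ in place of $|k-nx|$) or the precise restricted statement of \cite{Della}; it cannot be ``recovered by the same monotonicity device.''
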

\begin{lem} For any $f\in W_{w,\lambda}^{r},$ $0 \leqslant \lambda
\leqslant 1$ and $\alpha,\ \beta
>0$, we have
\begin{align}
\|w\varphi^{r\lambda}F_{n}^{(r)}\| \leqslant
C\|w\varphi^{r\lambda}f^{(r)}\|.\label{s7}
\end{align}
\end{lem}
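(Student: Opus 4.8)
The plan is to exploit the piecewise structure of $F_n$: by construction (see \cite{S. Yu}) one has $F_n\equiv f$ on the central interval $[\tfrac1n,1-\tfrac1n]$, while on each end interval $[0,\tfrac1n]$ and $[1-\tfrac1n,1]$ the function $F_n$ coincides with the polynomial that matches $f$ and its derivatives at the transition abscissa. Accordingly I would estimate the norm in \eqref{s7} by splitting
\[
\|w\varphi^{r\lambda}F_n^{(r)}\|=\max\Bigl\{\|w\varphi^{r\lambda}F_n^{(r)}\|_{[0,1/n]},\ \|w\varphi^{r\lambda}F_n^{(r)}\|_{[1/n,1-1/n]},\ \|w\varphi^{r\lambda}F_n^{(r)}\|_{[1-1/n,1]}\Bigr\}
\]
and bounding the three pieces separately. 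On the central interval $F_n^{(r)}=f^{(r)}$, so that contribution is immediately $\le\|w\varphi^{r\lambda}f^{(r)}\|$, which is the right-hand side of \eqref{s7}.

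For the left end interval $[0,\tfrac1n]$ (the right one being symmetric under $x\mapsto 1-x$), I would use that the gluing polynomial has degree at most $r$, so its $r$-th derivative is the constant $f^{(r)}(\tfrac1n)$ (and is $0$ when the degree is $\le r-1$); in either case $|F_n^{(r)}(x)|\le C|f^{(r)}(\tfrac1n)|$ for every $x\in[0,\tfrac1n]$. The estimate on this interval thus reduces to the single weight comparison
\[
\sup_{0\le x\le 1/n}w(x)\varphi^{r\lambda}(x)\le C\,w(\tfrac1n)\varphi^{r\lambda}(\tfrac1n).
\]
Writing $w(x)\varphi^{r\lambda}(x)=x^{\alpha+r\lambda/2}(1-x)^{\beta+r\lambda/2}$, the factor $x^{\alpha+r\lambda/2}$ is increasing while $(1-x)^{\beta+r\lambda/2}$ lies between $(1-\tfrac1n)^{\beta+r\lambda/2}$ and $1$ on $[0,\tfrac1n]$, so the supremum is comparable to the value at $x=\tfrac1n$; this is exactly the displayed comparison. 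Multiplying by $|f^{(r)}(\tfrac1n)|$ gives
\[
\|w\varphi^{r\lambda}F_n^{(r)}\|_{[0,1/n]}\le C\,w(\tfrac1n)\varphi^{r\lambda}(\tfrac1n)\,|f^{(r)}(\tfrac1n)|\le C\|w\varphi^{r\lambda}f^{(r)}\|,
\]
and symmetrically on $[1-\tfrac1n,1]$. Combining the three pieces yields \eqref{s7}.

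The main obstacle I anticipate is the endpoint analysis rather than the central interval: one must (i) identify $F_n^{(r)}$ on the polynomial pieces and check that it is governed by the single quantity $f^{(r)}(\tfrac1n)$, which hinges on the gluing polynomial having degree at most $r$ so that no derivative of $f$ higher than the $r$-th enters; and (ii) make the weight comparison uniform in $n$, for which the monotonicity of $x^{\alpha+r\lambda/2}$ together with $\alpha,\beta>0$ is essential (this is the point at which the hypothesis $\alpha,\beta>0$ is used). A minor technical point is that, since only $f^{(r-1)}\in A.C.((0,1))$ is assumed, $f^{(r)}$ is defined merely almost everywhere; one should therefore read $f^{(r)}(\tfrac1n)$ as the value employed in the construction of $F_n$, after which $w(\tfrac1n)\varphi^{r\lambda}(\tfrac1n)|f^{(r)}(\tfrac1n)|\le\|w\varphi^{r\lambda}f^{(r)}\|$ is immediate.
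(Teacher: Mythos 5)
Your structural model of $F_n$ is wrong in two places, and both errors hide the actual content of the lemma. First, $F_n$ does not switch abruptly from a polynomial on $[0,\frac 1n]$ to $f$ at $x=\frac 1n$: the construction (see \cite{S. Yu}, \cite{Della}) blends the interpolation polynomial $L_r(f)$ into $f$ across a transition layer $[\frac 1n,\frac 2n]$ by means of a smooth cutoff, so that $f-F_n$ equals a cutoff multiple of $f-L_r(f)$ there; without this blend $F_n^{(r)}$ would not even exist at $x=\frac 1n$, since $L_r$ matches only the values $f(\frac in)$, not derivatives. On that layer, differentiating the product $r$ times produces the terms $n^i\,(f-L_r(f))^{(r-i)}$, $i=0,\dots,r$, because each derivative of the cutoff costs a factor $n$. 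Controlling these is the bulk of the paper's proof: it uses a Bernstein-type inequality on $[\frac 1n,\frac 2n]$ from \cite{Totik} and the key weighted estimate
\begin{align*}
w(x)\varphi^{r\lambda}(x)\,|f(x)-L_r(f,x)|\ \leqslant\ Cn^{-r}\|w\varphi^{r\lambda}f^{(r)}\|,
\end{align*}
obtained via Taylor expansion with integral remainder at the nodes $\frac in$ together with the monotonicity $\frac{|\frac in-s|^{r-1}}{w(s)}\leqslant\frac{|\frac in-x|^{r-1}}{w(x)}$. None of this appears in your argument, and your central-interval step ($F_n^{(r)}=f^{(r)}$ on $[\frac 1n,1-\frac 1n]$) is false precisely on $[\frac 1n,\frac 2n]$.

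Second, on $[0,\frac 1n]$ the polynomial piece is the \emph{Lagrange} interpolant of $f$ at $\frac 1n,\dots,\frac rn$, not a Taylor-type polynomial, so its $r$-th derivative is not ``the constant $f^{(r)}(\frac 1n)$''; it is (up to constants) an $r$-th finite difference of point values of $f$ with step $\frac 1n$. Your closing patch --- read $f^{(r)}(\frac 1n)$ as ``the value employed in the construction'' --- is vacuous: the construction employs no pointwise value of $f^{(r)}$, and such a value need not exist, since only $f^{(r-1)}\in A.C.((0,1))$ is assumed. The correct route, as in the paper, converts the finite difference into an average of $f^{(r)}$ via its integral representation,
\begin{align*}
|L_r^{(r)}(f,x)|\ \leqslant\ Cn^{-\frac r2+1}\int_0^{\frac rn}u^{\frac r2}|f^{(r)}(u)|\,du\ \leqslant\ Cn^{-\frac r2+1}\|w\varphi^{r\lambda}f^{(r)}\|\int_0^{\frac rn}u^{\frac r2}w^{-1}(u)\varphi^{-r\lambda}(u)\,du,
\end{align*}
and then relies on the convergence of the last integral --- an averaged bound near $0$, not the single-point weight comparison $\sup_{[0,1/n]}w\varphi^{r\lambda}\leqslant Cw(\frac 1n)\varphi^{r\lambda}(\frac 1n)$ you propose (which is true but applied to a misidentified $F_n^{(r)}$). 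So both the endpoint and the transition estimates need to be redone; only the trivial observation that $F_n=f$ away from the endpoints survives.
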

\begin{proof} By symmetry, we only prove the above result when $x\in
(0,1/2]$, the others can be done similarly. Obviously, when $x \in
(0,1/n],$ by \cite{Totik}, we have
\begin{align*}
|L_r^{(r)}(f,x)| \leqslant C|\overrightarrow{\Delta}_{\frac
1r}^{r}f(0)| \ \leqslant \ Cn^{-{\frac r2}+1}\int_0^{\frac rn}u^{\frac r2}|f^{(r)}(u)|du\\
\leqslant Cn^{-{\frac
r2}+1}\|w\varphi^{r\lambda}f^{(r)}\|\int_0^{\frac rn}u^{\frac
r2}w^{-1}(u)\varphi^{-r\lambda}(u)du.
\end{align*}
So
\begin{align*}
|w(x)\varphi^{r\lambda}(x)F_{n}^{(r)}(x)| \leqslant
C\|w\varphi^{r\lambda}f^{(r)}\|.
\end{align*}
If $x\in [{\frac 1n},{\frac 2n}],$ we have
\begin{align*}
|w(x)\varphi^{r\lambda}(x)F_{n}^{(r)}(x)| \leqslant |w(x)\varphi^{r\lambda}(x)f^{(r)}(x)| + |w(x)\varphi^{r\lambda}(x)(f(x)-F_{n}(x))^{(r)}|\\
:= I_1 + I_2.
\end{align*}
For $I_2,$ we have
\begin{align*}
f(x)-F_{n}(x) = (\psi(nx-1)+1)(f(x)-L_r(f,x)).\\
w(x)\varphi^{r\lambda}(x)|(f(x)-F_{n}(x))^{(r)}|=w(x)\varphi^{r\lambda}(x)\sum_{i=0}^rn^i|(f(x)-L_r(f,x))^{(r-i)}|.
\end{align*}
By \cite{Totik}, then
\begin{align*}
|(f(x)-L_r(f,x))^{(r-i)}|_{[{\frac 1n},{\frac 2n}]} \leqslant
C(n^{r-i}\|f-L_r\|_{[{\frac 1n},{\frac 2n}]} +
n^{-i}\|f^{(r)}\|_{[{\frac 1n},{\frac 2n}]}),\ 0<j<r.
\end{align*}
Now, we estimate
\begin{align}
I:=w(x)\varphi^{r\lambda}(x)|f(x)-L_r(x)|.\label{s8}
\end{align}
By Taylor expansion, we have
\begin{align}
f({\frac in})=\sum_{u=0}^{r-1}\frac{({\frac
in}-x)^u}{u!}f^{(u)}(x)+{\frac 1{(r-1)!}}\int_{x}^{{\frac
in}}({\frac in}-s)^{r-1}f^{(r)}(s)ds,\label{s9}
\end{align}
It follows from (\ref{s9}) and the identity
\begin{align*}
\sum\limits_{i=1}^{r}({\frac in})^{v}l_{i}(x)=Cx^v,\ v=0,1,\cdots,r.
\end{align*}
We have
\begin{align*}
L_r(f,x)=\sum_{i=1}^{r}\sum_{u=0}^{r-1}\frac{({\frac
in}-x)^u}{u!}f^{(u)}(x)l_{i}(x)+{\frac
1{(r-1)!}}\sum_{i=1}^{r}l_{i}(x)\int_{x}^{{\frac in}}({\frac in}-s)^{r-1}f^{(r)}(s)ds\nonumber\\
=f(x)+C\sum_{u=1}^{r-1}f^{(u)}(x)(\sum_{v=0}^{u}C_{u}^{v}(-x)^{u-v}\sum_{i=1}^{r}({\frac in})^{v}l_{i}(x))\nonumber\\
+\ \ {\frac 1{(r-1)!}}\sum_{i=1}^{r}l_{i}(x)\int_{x}^{{\frac
in}}({\frac in}-s)^{r-1}f^{(r)}(s)ds,
\end{align*}
which implies that
$${w(x)}\varphi^{r\lambda}(x)|f(x)-L_r(f,x)|={\frac 1{r!}}{w(x)}\varphi^{r\lambda}(x)\sum_{i=1}^{r}l_{i}(x)\int_{x}^{{\frac in}}({\frac in}-s)^{r-1}f^{(r)}(s)ds,$$
since $|l_{i}(x)|\leqslant C$ for $x\in [0,{\frac 2n}],\
i=1,2,\cdots,r$.
It follows from $\frac{|{\frac in}-s|^{r-1}}{w(s)}\leqslant
\frac{|{\frac in}-x|^{r-1}}{w(x)},$ $s$ between ${\frac in}$ and
$x$, then
\begin{align*}
w(x)\varphi^{r\lambda}(x)|f(x)-L_r(f,x)|\leqslant Cw(x)\varphi^{r\lambda}(x)\sum_{i=1}^{r}\int_{x}^{{\frac in}}({\frac in}-s)^{r-1}|f^{(r)}(s)|ds\nonumber\\
\leqslant C\varphi^{r\lambda}(x)\|w\varphi^{r\lambda}f^{(r)}\|\sum_{i=1}^{r}\int_{x}^{{\frac in}}({\frac in}-s)^{r-1}\varphi^{-r\lambda}(s)ds\nonumber\\
\leqslant {\frac {C}{n^r}}\|w\varphi^{r\lambda}f^{(r)}\|.
\end{align*}
Thus
\begin{align*}
I \leqslant C\|w\varphi^{r\lambda}f^{(r)}\|.
\end{align*}
So, we get
\begin{align*}
I_2 \leqslant C\|w\varphi^{r\lambda}f^{(r)}\|.
\end{align*}
Above all, we have
\begin{align*}
|w(x)\varphi^{r\lambda}(x)F_{n}^{(r)}(x)| \leqslant
C\|w\varphi^{r\lambda}f^{(r)}\|.
\end{align*}
\end{proof}
\begin{lem} If $f\in W_{w,\lambda}^{r},$ $0 \leqslant \lambda \leqslant 1$
and $\alpha,\ \beta
>0$, then
\begin{align}
|w(x)(f(x)-L_r(f,x))|_{[0,{\frac 2n}]} \leqslant C(\frac
{\delta_n(x)}{\sqrt{n}\varphi^\lambda(x)})^r\|w\varphi^{r\lambda}f^{(r)}\|.\label{s10}\\
|w(x)(f(x)-R_r(f,x))|_{[1-{\frac 2n},1]} \leqslant C(\frac
{\delta_n(x)}{\sqrt{n}\varphi^\lambda(x)})^r\|w\varphi^{r\lambda}f^{(r)}\|.\label{s11}
\end{align}
\end{lem}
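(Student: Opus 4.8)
The plan is to reduce both inequalities to the exact remainder representation of the interpolating polynomial already derived in the preceding lemma. By symmetry — the substitution $x\mapsto 1-x$ interchanges $\alpha$ and $\beta$, fixes $\varphi$ and $\delta_n$, sends $[1-\frac2n,1]$ to $[0,\frac2n]$ and $R_r$ to $L_r$ — it suffices to establish \eqref{s10} for $x\in(0,\frac2n]$; the value $x=0$ is trivial because $w(0)=0$ when $\alpha>0$, while \eqref{s11} then follows at once.

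First I would recall the Taylor-with-remainder identity used in the proof of the previous lemma,
$$f(x)-L_r(f,x)=\frac{1}{r!}\sum_{i=1}^{r}l_i(x)\int_x^{i/n}\Bigl(\frac in-s\Bigr)^{r-1}f^{(r)}(s)\,ds,\qquad x\in\Bigl(0,\tfrac2n\Bigr],$$
the only difference from that lemma being that the quantity to be bounded here, $w(x)|f(x)-L_r(f,x)|$, carries no factor $\varphi^{r\lambda}(x)$. I would then run the same chain of estimates: bound $|l_i(x)|\leq C$ on $[0,\frac2n]$, write $|f^{(r)}(s)|\leq\|w\varphi^{r\lambda}f^{(r)}\|\,w^{-1}(s)\varphi^{-r\lambda}(s)$, and use the monotonicity $\frac{|i/n-s|^{r-1}}{w(s)}\leq\frac{|i/n-x|^{r-1}}{w(x)}$ (valid for $s$ between $x$ and $i/n$) to push $w(x)$ through and cancel $w(s)$, which yields
$$w(x)|f(x)-L_r(f,x)|\leq C\|w\varphi^{r\lambda}f^{(r)}\|\sum_{i=1}^{r}\Bigl|\frac in-x\Bigr|^{\,r-1}\int_x^{i/n}\varphi^{-r\lambda}(s)\,ds.$$

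Next I would carry out the integral estimate exactly as in the previous lemma, using $|\frac in-x|^{r-1}\leq Cn^{-(r-1)}$ and $\varphi(s)\asymp\sqrt s$ on $(0,\frac rn]$, and splitting into the regimes $r\lambda<2$, $r\lambda=2$, $r\lambda>2$. The difference is that here I retain the resulting power of $x$ rather than absorbing it into an extra $\varphi^{r\lambda}(x)$, which produces
$$w(x)|f(x)-L_r(f,x)|\leq\frac{C}{n^{r}}\,\varphi^{-r\lambda}(x)\,\|w\varphi^{r\lambda}f^{(r)}\|.$$
Since $\delta_n(x)\geq n^{-1/2}$ on $[0,1]$, one has $n^{-r}\varphi^{-r\lambda}(x)\leq\bigl(\delta_n(x)/(\sqrt n\,\varphi^{\lambda}(x))\bigr)^{r}$, which is exactly \eqref{s10}.

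The step I expect to be the main obstacle is the borderline integral estimate in the critical case $r\lambda=2$, where $\int_x^{i/n}\varphi^{-r\lambda}(s)\,ds$ generates a logarithm $\log\frac{i}{nx}$ that is unbounded as $x\to0$. The key observation is that, unlike in the previous lemma, the target bound itself already blows up like $\varphi^{-r\lambda}(x)\asymp x^{-1}$ near the endpoint, so no additional decay in $x$ is required: pairing the logarithm with the factor $|\frac in-x|^{r-1}\leq Cn^{-(r-1)}$ and using that $x\log\frac{1}{nx}\leq Cn^{-1}$ on $(0,\frac2n]$ keeps the estimate within the asserted order $n^{-r}\varphi^{-r\lambda}(x)$.
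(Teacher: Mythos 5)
Your proposal is correct and follows essentially the same route as the paper's own proof: the Taylor expansion with integral remainder at the nodes $\frac in$, reproduction of the polynomial part by $L_r$ together with $|l_i(x)|\leqslant C$ on $[0,\frac 2n]$, the monotonicity inequality $\frac{|i/n-s|^{r-1}}{w(s)}\leqslant \frac{|i/n-x|^{r-1}}{w(x)}$ to cancel the weight, a direct estimate of the resulting endpoint integral, and symmetry for \eqref{s11}. Your version is in fact more careful than the paper's at the last step — the paper compresses the integral estimate into a chain passing through $\varphi^{r}(x)\leqslant\delta_n^{r}(x)$ and $\int_x^{i/n}(\frac in-s)^{r-1}\varphi^{-r}(s)\,ds$ without the case distinction, whereas your explicit split into $r\lambda<2$, $r\lambda=2$, $r\lambda>2$ (with the logarithm absorbed via $nx\log\frac{1}{nx}\leqslant C$ on $(0,\frac 2n]$) and the final comparison $n^{-r}\varphi^{-r\lambda}(x)\leqslant\bigl(\delta_n(x)/(\sqrt n\,\varphi^{\lambda}(x))\bigr)^{r}$ using $\delta_n(x)\geqslant n^{-1/2}$ make the same estimate airtight.
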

\begin{proof} By Taylor expansion, we have
\begin{align}
f({\frac in})=\sum_{u=0}^{r-1}\frac{({\frac
in}-x)^u}{u!}f^{(u)}(x)+{\frac 1{r!}}\int_{x}^{{\frac in}}({\frac
in}-s)^{r-1}f^{(r)}(s)ds,\label{s12}
\end{align}
It follows from (\ref{s12}) and the identity
\begin{align*}
\sum\limits_{i=1}^{r-1}({\frac in})^{v}l_{i}(x)=Cx^v,\
v=0,1,\ldots,r.
\end{align*}
We have
\begin{align*}
L_r(f,x)=\sum_{i=1}^{r}\sum_{u=0}^{r-1}\frac{({\frac
in}-x)^u}{u!}f^{(u)}(x)l_{i}(x)+{\frac
1{(r-1)!}}\sum_{i=1}^{r}l_{i}(x)\int_{x}^{{\frac in}}({\frac in}-s)^{r-1}f^{(r)}(s)ds\nonumber\\
=f(x)+C\sum_{u=1}^{r-1}f^{(u)}(x)(\sum_{v=0}^{u}C_{u}^{v}(-x)^{u-v}\sum_{i=1}^{r}({\frac in})^{v}l_{i}(x))\nonumber\\
+\ \ {\frac 1{(r-1)!}}\sum_{i=1}^{r}l_{i}(x)\int_{x}^{{\frac
in}}({\frac in}-s)^{r-1}f^{(r)}(s)ds,
\end{align*}
which implies that
$$w(x)|f(x)-L_r(f,x)|={\frac 1{(r-1)!}}w(x)\sum_{i=1}^{r}l_{i}(x)\int_{x}^{{\frac in}}({\frac in}-s)^{r-1}f^{(r)}(s)ds,$$
since $|l_{i}(x)|\leqslant C$ for $x\in [0,{\frac
2n}],\ i=1,2,\cdots,r$.  \\

It follows from $\frac{|{\frac in}-s|^{r-1}}{w(s)}\leqslant
\frac{|{\frac in}-x|^{r-1}}{w(x)},$ $s$ between ${\frac in}$ and
$x$, then
\begin{align*}
w(x)|f(x)-L_r(f,x)|\leqslant Cw(x)\sum_{i=1}^{r}\int_{x}^{{\frac in}}({\frac in}-s)^{r-1}|f^{(r)}(s)|ds\nonumber\\
\leqslant C{\frac
{\varphi^r(x)}{\varphi^{r\lambda}(x)}}\|w\varphi^{r\lambda}f^{(r)}\|\sum_{i=1}^{r}\int_{x}^{{\frac in}}({\frac in}-s)^{r-1}\varphi^{-r}(s)ds\nonumber\\
\leqslant C{\frac
{\delta_n^r(x)}{\varphi^{r\lambda}(x)}}\|w\varphi^{r\lambda}f^{(r)}\|\sum_{i=1}^{r}\int_{x}^{{\frac in}}({\frac in}-s)^{r-1}\varphi^{-r}(s)ds\nonumber\\
\leqslant C(\frac
{\delta_n(x)}{\sqrt{n}\varphi^\lambda(x)})^r\|w\varphi^{r\lambda}f^{(r)}\|.
\end{align*}

The proof of (\ref{s11}) can be done similarly.
\end{proof}
\begin{lem}(\cite{S. Yu}) For every $\alpha,\ \beta>0,$ we have
\begin{align}
\|wB^\ast_{n,r-1}(f)\| \leqslant C\|wf\|.\label{s13}
\end{align}
\end{lem}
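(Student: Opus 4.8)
The plan is to reduce the inequality to a uniform bound on a single Bernstein operator and then reassemble the combination by linearity. Since the combination is $\sum_{i}C_i(n)B_{n_i}(F_n,\cdot)$ with $\sum_i|C_i(n)|\leqslant C$ by condition (b) and $n\leqslant n_i\leqslant Cn$ by condition (a), it suffices to prove $\|wB_{n_i}(F_n)\|\leqslant C\|wf\|$ uniformly in $i$; summing against the $|C_i(n)|$ then yields (\ref{s13}).

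First I would record the auxiliary bound $\|wF_n\|\leqslant C\|wf\|$. On the middle block $F_n=f$, so there $w|F_n|=w|f|\leqslant\|wf\|$. Near an endpoint $F_n$ is the glued interpolation polynomial (for instance $L_r(f,\cdot)$ near $0$); since $|l_i(x)|\leqslant C$ on $[0,2/n]$ and the weight ratio $w(x)/w(i/n)$ stays bounded for $x\in[0,2/n]$ and $1\leqslant i\leqslant r$, one gets $w(x)|L_r(f,x)|\leqslant C\|wf\|$, with the endpoint near $1$ handled symmetrically via $R_r$.

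The heart of the argument is the pointwise estimate $w(x)|B_{n_i}(F_n,x)|\leqslant C\|wf\|$. I would split the Bernstein sum into the interior terms $1\leqslant k\leqslant n_i-1$ and the two endpoint terms $k=0,n_i$. For the interior, write $w(x)|F_n(k/n_i)|\leqslant\frac{w(x)}{w(k/n_i)}\|wF_n\|$, factor out $\|wF_n\|\leqslant C\|wf\|$, and reduce to
\[
w(x)\sum_{k=1}^{n_i-1}\Big(\tfrac{k}{n_i}\Big)^{-\alpha}\Big(1-\tfrac{k}{n_i}\Big)^{-\beta}p_{n_i,k}(x),
\]
which (\ref{s5}), applied with $u=\alpha$, $v=\beta$ and $n$ replaced by $n_i$, bounds by $C\,w(x)x^{-\alpha}(1-x)^{-\beta}=C$.

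The delicate point I expect is the endpoint bookkeeping: $w$ vanishes at $0$ and $1$, while $F_n(0)=L_r(f,0)$ may be as large as $Cn^\alpha\|wf\|$ because $|f(i/n)|\leqslant w(i/n)^{-1}\|wf\|\leqslant Cn^\alpha\|wf\|$ for $1\leqslant i\leqslant r$. One then controls $w(x)|F_n(0)|p_{n_i,0}(x)\leqslant Cn^\alpha x^\alpha(1-x)^{n_i}\|wf\|$ by maximizing $x^\alpha e^{-n_i x}$ at $x=\alpha/n_i$, producing the factor $(n/n_i)^\alpha\leqslant C$ thanks to $n_i\geqslant n$; the term $k=n_i$ is symmetric. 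Apart from this endpoint estimate and the correct invocation of (\ref{s5}), the remainder is linearity together with the coefficient bound (b).
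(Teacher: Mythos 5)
Your proposal is correct, but there is nothing in this paper to compare it against line by line: the paper does not prove this lemma at all, it simply quotes it from \cite{S. Yu} (where it appears, for $n\geqslant 2r+\alpha+\beta$, as the boundedness statement $\|wB^{\ast}_{n,r}(f)\|\leqslant C\|wf\|$ recalled in the introduction). Your argument is in substance the standard proof from that source, and it runs parallel to the one computation this paper does carry out, namely the $H_1+H_2+H_3$ splitting in the proof of Theorem \ref{t1}: there too the sum is cut into the two endpoint terms and the interior block, the interior is handled by the weight-ratio comparison $w(k/(n_i-r))/w((k+r-j)/n_i)\leqslant C$ followed by Zhou's inequality (\ref{s5}), and the $k=0$ term is controlled exactly by your maximization mechanism, via $\sum_i n_i^{r}(n_ix)^{\alpha}(1-x)^{n_i-r}\leqslant Cn^{r}$, which is your bound $n^{\alpha}x^{\alpha}(1-x)^{n_i}\leqslant C(n/n_i)^{\alpha}\leqslant C$ in the case $r=0$ of derivatives. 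All the individual steps you invoke check out: $\|wF_n\|\leqslant C\|wf\|$ holds because on $[0,2/n]$ one has $|l_i(x)|\leqslant C$ and $w(x)/w(i/n)\leqslant C$ (this is where $n\geqslant Cr$ is tacitly needed, matching the restriction $n\geqslant 2r+\alpha+\beta$ in \cite{S. Yu}); on the blending interval $[1/n,2/n]$ the function $F_n$ is a bounded-coefficient combination of $f$ and $L_r$ since $\psi$ is bounded, so your two estimates cover it; the interior application of (\ref{s5}) with $u=\alpha$, $v=\beta$ is legitimate since $\alpha,\beta>0\geqslant0$; and the endpoint bookkeeping correctly balances the growth $|F_n(0)|\leqslant Cn^{\alpha}\|wf\|$ against $\max_x x^{\alpha}(1-x)^{n_i}\leqslant Cn_i^{-\alpha}$, using $n_i\geqslant n$ from condition (a). In short, you have supplied a complete and correct proof of a statement the paper delegates to a citation, and it is the same proof the cited paper gives rather than a genuinely new route.
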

\begin{lem}(\cite{Wang}) If \
$\varphi(x)=\sqrt{x(1-x)},\ 0 \leqslant \lambda \leqslant 1,\ 0
\leqslant \beta \leqslant 1,$ then
\begin{align}
\int_{-{\frac {h\varphi^\lambda(x)}{2}}}^{\frac
{h\varphi^\lambda(x)}{2}} \cdots \int_{-{\frac
{h\varphi^\lambda(x)}{2}}}^{\frac
{h\varphi^\lambda(x)}{2}}\varphi^{-r\beta}(x+\sum_{k=1}^ru_k)du_1
\cdots du_r \leqslant Ch^r\varphi^{r(\lambda-\beta)}(x).\label{s14}
\end{align}
\end{lem}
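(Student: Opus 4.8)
The plan is to exploit the product structure $\varphi^{-r\beta}(s)=s^{-r\beta/2}(1-s)^{-r\beta/2}$ together with the symmetry $x\mapsto 1-x$, $u_k\mapsto-u_k$, which leaves both sides of (\ref{s14}) invariant, so that it suffices to control the contribution of the left endpoint and to work on $x\in(0,1/2]$, where $\varphi(s)\asymp\sqrt{s}$ on the relevant range and $(1-s)^{-r\beta/2}\asymp1$. Writing $\delta:=h\varphi^\lambda(x)$, the target $Ch^r\varphi^{r(\lambda-\beta)}(x)$ is the same as $C\delta^r\varphi^{-r\beta}(x)$; so after the substitution $u_k=\delta w_k$ and after factoring out $x^{-r\beta/2}\asymp\varphi^{-r\beta}(x)$ (and $\delta\asymp hx^{\lambda/2}$), the whole statement reduces to the single scale-free inequality
\begin{align*}
\int_{[-1/2,1/2]^r}\Big(1+a\sum_{k=1}^r w_k\Big)^{-r\beta/2}\,dw\leqslant C,\qquad a:=\frac{\delta}{x},
\end{align*}
with $C=C(r,\beta)$. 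For the left-hand side of (\ref{s14}) to be a finite real quantity the argument $x+\sum u_k$ must remain in $(0,1)$, which forces $r\delta/2\leqslant x$, i.e. $0<a\leqslant 2/r$; this is exactly the range on which the displayed integral has to be bounded.

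Next I would pass from the $r$-fold integral to a one-dimensional one by Fubini: the density $g$ of $\sum_{k=1}^r w_k$, with $w_k$ uniform on $[-1/2,1/2]$, is supported on $[-r/2,r/2]$, is bounded, and — this is the crucial point — vanishes like $(t+r/2)^{r-1}$ as $t\to(-r/2)^+$. Hence the integral equals $\int_{-r/2}^{r/2}(1+at)^{-r\beta/2}g(t)\,dt$. When $a$ is bounded away from $2/r$ (the easy regime, where the shift is small compared with $\varphi^2(x)=x(1-x)$) the factor $1+at$ is bounded below by a positive constant on the whole support, so $\varphi(x+\sum u_k)\asymp\varphi(x)$ and the estimate is immediate from the trivial volume bound $(h\varphi^\lambda(x))^r\cdot\varphi^{-r\beta}(x)$. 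The only genuine difficulty is the endpoint regime $a\uparrow 2/r$, where $1+at\to0$ at the corner $w=(-\tfrac12,\dots,-\tfrac12)$ and $\varphi^{-r\beta}$ becomes singular.

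The main obstacle is therefore the convergence and uniform boundedness of this singular integral near $t=-r/2$, and the point is that a crude bound $g\leqslant C$ fails: it would lead to $\int_0 s^{-r\beta/2}\,ds$, which diverges for $r\beta\geqslant2$. Instead, the order-$(r-1)$ vanishing of the kernel $g$ is exactly what rescues the estimate. Substituting $\sigma=1+at$ near the corner turns the local contribution into a constant multiple of $\int_0^{c}\sigma^{\,r-1-r\beta/2}\,d\sigma$, which converges precisely because $r-1-r\beta/2>-1\iff\beta<2$, and this is guaranteed by the hypothesis $\beta\leqslant1$. Consequently $a\mapsto\int_{-r/2}^{r/2}(1+at)^{-r\beta/2}g(t)\,dt$ is continuous on $(0,2/r]$ with a finite limit at each endpoint (the value $1$ as $a\to0^+$ and a convergent improper integral at $a=2/r$), so its supremum is a finite constant $C(r,\beta)$, which completes the reduction. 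I would close by noting that the same computation at the right endpoint, obtained from the symmetry above, supplies the omitted $(1-s)^{-r\beta/2}$ factor, so that (\ref{s14}) for general $x\in(0,1)$ follows by combining the two endpoint estimates with the trivial interior bound.
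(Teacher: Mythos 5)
Your argument is correct, but be aware that there is nothing in the paper to compare it against: the lemma is imported verbatim from \cite{Wang} (Zhang--Xu, stated there for Baskakov-type weights) and the paper gives no proof, so yours is necessarily an independent, self-contained route. Its key point --- that after rescaling $u_k=\delta w_k$, $\delta=h\varphi^\lambda(x)$, the $r$-fold integral collapses to $\delta^r\int_{-r/2}^{r/2}(1+at)^{-r\beta/2}g(t)\,dt$ with $a=\delta/x\in(0,2/r]$, where the Irwin--Hall-type density $g$ of a sum of $r$ independent uniforms vanishes to order $r-1$ at $t=-r/2$ --- is exactly the right mechanism: the crude bound $g\leqslant C$ would indeed fail for $r\beta\geqslant 2$, whereas the exponent $r-1-r\beta/2>-1$ handles every $\beta<2$, more than the hypothesis $\beta\leqslant 1$ requires; this is in substance the standard estimate from the Ditzian--Totik circle of ideas. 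Two spots deserve tightening. First, your opening claim that for $x\in(0,1/2]$ the factor $(1-s)^{-r\beta/2}$ is $\asymp 1$ on the relevant range is not quite true at the extreme of the admissible parameters: if $x$ is near $1/2$ and $a$ near $2/r$, the argument $s=x+\sum_{k}u_k$ can approach $1$, so both endpoint singularities are active at once. Your closing sentence effectively repairs this, but it should be made explicit via the pointwise split $\varphi^{-r\beta}(s)\leqslant 2^{r\beta/2}\bigl(s^{-r\beta/2}+(1-s)^{-r\beta/2}\bigr)$, valid because $s(1-s)\geqslant\tfrac12\min\{s,1-s\}$; applying your one-sided estimate to the first term and its mirror image to the second (the mirror constraint $r\delta/2\leqslant 1-x$ comes from the same finiteness requirement as $r\delta/2\leqslant x$) yields $C\delta^r\bigl(x^{-r\beta/2}+(1-x)^{-r\beta/2}\bigr)\leqslant C\delta^r\varphi^{-r\beta}(x)$, which is \eqref{s14}. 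Second, your continuity-plus-endpoint-limits argument for the supremum over $a$ is sound (the domination near $a=2/r$ being supplied by $(t+r/2)^{r-1-r\beta/2}$), but a direct uniform bound is quicker and avoids any appeal to limits: for $t\geqslant -r/4$ one has $1+at\geqslant 1-ar/4\geqslant 1/2$, while for $t\in[-r/2,-r/4]$ either $ar/2\leqslant 1/2$, so again $1+at\geqslant 1/2$, or $a\geqslant 1/r$ and $1+at=(1-ar/2)+a(t+r/2)\geqslant a(t+r/2)$, whence that piece is at most $C\int_0^{r/4}(as)^{-r\beta/2}s^{r-1}\,ds\leqslant C(r,\beta)$ uniformly in $a$.
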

\section{Proof of Theorems}
\subsection{Proof of Theorem \ref{t1}}

By symmetry, in what follows we will always assume that $x \in
(0,{\frac 12}].$ It is sufficient to prove the validity for
$B_{n,r-1}(F_n,x)$ instead of $B^\ast_{n,r-1}(f,x).$ When $x\in
(0,{\frac {1}{n}}),$ we have
\begin{align*}
|w(x)B_{n,r-1}^{\ast(r)}(f,x)|\leqslant w(x)\sum_{i=0}^{r-2}{\frac
{n_{i}!}{({n_{i}-r})!}}\sum_{k=0}^{n_i-r}|\overrightarrow{\Delta}_{\frac
1{n_i}}^{r}F_{n}{(\frac k{n_i})}|p_{n_i-r,k}(x)\nonumber\\
\leqslant
Cw(x)\sum_{i=0}^{r-2}n_{i}^{r}\sum_{k=0}^{n_i-r}|\overrightarrow{\Delta}_{\frac
1{n_i}}^{r}F_{n}{(\frac k{n_i})}|p_{n_i-r,k}(x)\nonumber\\
\leqslant
Cw(x)\sum_{i=0}^{r-2}n_{i}^{r}\sum_{k=0}^{n_i-r}\sum_{j=0}^{r}C_{r}^{j}|F_{n}({\frac
{k+r-j}{n_i}})|p_{n_i-r,k}(x)\nonumber\\
\leqslant
Cw(x)\sum_{i=0}^{r-2}n_{i}^{r}\sum_{j=0}^{r}C_{r}^{j}|F_{n}({\frac
{r-j}{n_i}})|p_{n_i-r,0}(x)\nonumber\\
+ \
Cw(x)\sum_{i=0}^{r-2}n_{i}^{r}\sum_{j=0}^{r}C_{r}^{j}|F_{n}({\frac
{n_{i}-j}{n_i}})|p_{n_i-r,n_i-r}(x)\nonumber\\
+ \
Cw(x)\sum_{i=0}^{r-2}n_{i}^{r}\sum_{k=1}^{n_i-r-1}\sum_{j=0}^{r}C_{r}^{j}|F_{n}({\frac
{k+r-j}{n_i}})|p_{n_i-r,k}(x)\nonumber\\
:=H_1 +H_2 + H_3.
\end{align*}

We have
\begin{align*}
H_1\leqslant
Cw(x)\|wf\|\sum_{i=0}^{r-2}n_{i}^{r}w^{-1}(\frac {1}{n_i})p_{n_i-r,0}(x)\\
\leqslant C\|wf\|\sum_{i=0}^{r-2}n_{i}^{r}(n_ix)^{\alpha}(1-x)^{n_i-r}\\
\leqslant Cn^{r}\|wf\|.
\end{align*}

When $1 \leqslant k \leqslant n_i-r-1,$ we have $1 \leqslant k+2r-j
\leqslant n_i-1,$ and thus
\begin{align*}
{\frac {w({\frac {k}{n_i-r}})}{w({\frac {k+r-j}{n_i}})}}=({\frac
{n_i}{n_i-r}})^{\alpha+\beta}({\frac {k}{k+r-j}})^\alpha({\frac
{n_i-r-k}{n_i-r-k+j}})^\beta \leqslant C.
\end{align*}
Thereof, by (\ref{s5}), we have
\begin{align*}
H_3 \leqslant
Cw(x)\|wF_n\|\sum_{i=0}^{r-2}n_{i}^{r}\sum_{k=1}^{n_i-r-1}\sum_{j=0}^{r}{\frac
{1}{w({\frac {k+r-j}{n_i}})}}p_{n_i-r,k}(x)\\
\leqslant
Cw(x)\|wF_n\|\sum_{i=0}^{r-2}n_{i}^{r}\sum_{k=1}^{n_i-r-1}{\frac
{1}{w({\frac {k}{n_i-r}})}}p_{n_i-r,k}(x)\\
\leqslant Cn^r\|wF_n\| \ \leqslant \ Cn^r\|wf\|.
\end{align*}

Similarly, we can get $H_2\leqslant Cn^{r}\|wf\|$. So
\begin{align}
|w(x)B^{\ast(r)}_{n,r-1}(f,x)| \leqslant Cn^{r}\|wf\|,\ x\in
(0,{\frac {1}{n}}).\label{s15}
\end{align}
When $x\in [{\frac {1}{n}},{\frac 12}],$ according to \cite{Totik},
we have
\begin{align*}
|w(x)B_{n,r-1}^{\ast(r)}(f,x)|\\
=|w(x)B_{n,r-1}^{(r)}({F_{n}},x)|\\
\leqslant w(x)(\varphi^{2}(x))^{-r}\sum_{i=0}^{r-2}\sum_{j=0}^{r}|Q_{j}(x,n_i)|n_{i}^{j}\sum_{k=0}^n|(x-{\frac kn_{i}})^{j}F_{n}({\frac kn_{i}})|p_{n_i,k}(x).\\
\end{align*}
Then\\
$Q_{j}(x,n_i)=(n_ix(1-x))^{[{\frac {r-j}{2}}]},$ and
$(\varphi^{2}(x))^{-r}Q_{j}(x,n_i)n_{i}^{j}\leqslant
C(n_{i}/\varphi^{2}(x))^{\frac {r+j}{2}},$ we have
\begin{align}
|w(x)B_{n,r-1}^{\ast(r)}(f,x)|\leqslant
Cw(x)\sum_{i=0}^{r-2}\sum_{j=0}^{r}(\frac
{n_{i}}{\varphi^{2}(x)})^{\frac
{r+j}{2}}\sum_{k=0}^{n_{i}}|(x-{\frac
kn_{i}})^{j}F_{n}({\frac kn_{i}})|p_{n_i,k}(x)\nonumber\\
\leqslant C\|wF_n\|w(x)\sum_{i=0}^{r-2}\sum_{j=0}^{r}(\frac
{n_{i}}{\varphi^{2}(x)})^{\frac {r+j}{2}}\sum_{k=0}^{n_i}{\frac
{|x-{\frac {k}{n_{i}}|^{j}}}{w({\frac
{k^\ast}{n_i})}}}p_{n_i,k}(x),\label{s16}
\end{align}
where $k^\ast =1$ for $k=0,$ $k^\ast =n_i-1$ for $k=n_i$ and $k^\ast
=k$ for $1<k<n_i.$ Note that
\begin{align*}
w^2(x){\frac {p_{n_i,0}(x)}{w^2({\frac {1}{n_{i}}})}} \leqslant
C(n_ix)^{2\alpha}(1-x)^{n_i} \leqslant C,
\end{align*}
and
\begin{align*}
w^2(x){\frac {p_{n_i,n_i}(x)}{w^2(1-{\frac {1}{n_{i}}})}} \leqslant
Cn_i^\beta x^{n_i} \leqslant C{\frac {n_i^\beta}{2^{n_i}}} \leqslant
C.
\end{align*}
By (\ref{s5}), we have
\begin{align}
\sum_{k=0}^{n_i}{\frac {1}{w^2({\frac
{k^\ast}{n_{i}}})}}p_{n_i,k}(x) \leqslant Cw^{-2}(x).\label{s17}
\end{align}
Now, applying Cauchy's inequality, by (\ref{s6}) and (\ref{s17}), we
have
\begin{align*}
\sum_{k=0}^{n_i}{\frac {{|x-{\frac {k}{n_{i}}|^{j}}}}{{w({\frac
{k^\ast}{n_i}})}}}p_{n_i,k}(x) \leqslant
({\sum_{k=0}^{n_i}{|x-{\frac
{k}{n_{i}}|^{2j}}}p_{n_i,k}(x)})^{1/2}({\sum_{k=0}^{n_i}{\frac
{1}{{w^2({\frac {k^\ast}{n_i}})}}}p_{n_i,k}(x)})^{1/2}\\
\leqslant Cn_i^{-j/2}\varphi^j(x)w^{-1}(x).
\end{align*}
Substituting this to (\ref{s16}), we have
\begin{align}
|w(x)B^{\ast(r)}_{n,r-1}(f,x)| \leqslant Cn^{r}\|wf\|,\ x\in [{\frac
{1}{n}},{\frac 12}].\label{s18}
\end{align}
We get Theorem \ref{t1} by (\ref{s15}) and (\ref{s18}). $\Box$
\subsection{Proof of Theorem \ref{t2}}

(1) When $f\in C_w$, we proceed it as follows:\\~\\
\textit{Case 1.} If $0\leqslant \varphi(x)\leqslant {\frac
{1}{\sqrt{n}}}$, by $(\ref{s2})$, we have
\begin{align}
|w(x)\varphi^{r\lambda}(x)B_{n,r-1}^{\ast(r)}(f,x)|\leqslant
Cn^{-r\lambda/2}|w(x)B_{n,r-1}^{\ast(r)}(f,x)|\leqslant
Cn^{r(1-\lambda/2)}\|wf\|.\label{s19}
\end{align}
\textit{Case 2.} If $\varphi(x)> {\frac {1}{\sqrt{n}}}$, we have
\begin{align*}
|B_{n,r-1}^{\ast(r)}(f,x)|=|B_{n,r-1}^{(r)}(F_{n},x)|\nonumber\\
\leqslant(\varphi^{2}(x))^{-r}\sum_{i=0}^{r-2}\sum_{j=0}^{r}|Q_{j}(x,n_i)C_{i}(n)|n_{i}^{j}\sum_{k=0}^{n_i}|(x-{\frac
kn_{i}})^{j}F_{n}({\frac kn_{i}})|p_{n_i,k}(x),
\end{align*}
$Q_{j}(x,n_i)=(n_ix(1-x))^{[{\frac {r-j}{2}}]},$ and
$(\varphi^{2}(x))^{-r}Q_{j}(x,n_i)n_{i}^{j}\leqslant
C(n_i/\varphi^{2}(x))^{\frac {r+j}{2}}$.\\ So
\begin{align}
|w(x)\varphi^{r\lambda}(x)B_{n,r-1}^{\ast(r)}(f,x)|\nonumber\\
\leqslant
Cw(x)\varphi^{r\lambda}(x)\sum_{i=0}^{r-2}\sum_{j=0}^{r}({\frac
{n_{i}}{\varphi^2(x)}})^{\frac {r+j}{2}}\sum_{k=0}^{n_i}|(x-{\frac
kn_{i}})^{j}F_{n}({\frac kn_{i}})|p_{n_i,k}(x)\nonumber\\
\leqslant Cn^{\frac r2}\varphi^{r(\lambda-1)}(x).\label{s20}
\end{align}
It follows from combining with (\ref{s19}) and (\ref{s20}) that the first inequality is proved.\\
(2) When $f\in W_{w,\lambda}^{r},$ we have
\begin{align}
B_{n,r-1}^{(r)}(F_{n},x)=\sum_{i=0}^{r-2}C_{i}(n)n_{i}^{r}\sum_{k=0}^{n_{i}-r}\overrightarrow{\Delta}_{\frac
1{n_{i}}}^{r}F_{n}({\frac kn_{i}})p_{n_i-r,k}(x).\label{s21}
\end{align}
If $0<k<n_{i}-r,$ we have
\begin{align}
|\overrightarrow{\Delta}_{\frac 1{n_{i}}}^{r}F_{n}({\frac kn_{i}})
|\leqslant Cn_{i}^{-r+1}\int_{0}^{\frac
{r}{n_{i}}}|F_{n}^{(r)}({\frac kn_{i}}+u)|du,\label{s22}
\end{align}
If $k=0,$ we have
\begin{align}
|\overrightarrow{\Delta}_{\frac 1{n_{i}}}^{r}F_{n}(0)|\leqslant
C\int_{0}^{\frac {r}{n_{i}}}u^{r-1}|F_{n}^{(r)}(u)|du,\label{s23}
\end{align}
Similarly
\begin{align}
|\overrightarrow{\Delta}_{\frac 1{n_{i}}}^{r}F_{n}({\frac
{n_{i}-r}{n_{i}}})|\leqslant Cn_i^{-r+1}\int_{1-{\frac
{r}{n_{i}}}}^{1}(1-u)^{\frac r2}|F_{n}^{(r)}(u)|du.\label{s24}
\end{align}
By (\ref{s21})-(\ref{s24}), we have
\begin{align}
|w(x)\varphi^{r\lambda}(x)B_{n,r-1}^{\ast(r)}(f,x)|\nonumber\\\leqslant
C{w(x)}\varphi^{r\lambda}(x)\|w\varphi^{r\lambda}F_n^{(r)}\|\sum_{i=0}^{r-2}\sum_{k=0}^{n_{i}-r}(w\varphi^{r\lambda})^{-1}(\frac
{k^\ast}{n_i})p_{n_i-r,k}(x),\label{s25}
\end{align}
where $k^\ast =1$ for $k=0,$ $k^\ast =n_i-r-1$ for $k=n_i-r$ and
$k^\ast =k$ for $1<k<n_i-r.$ By (\ref{s5}), we have
\begin{align}
\sum_{k=0}^{n_i-r}(w\varphi^{r\lambda})^{-1}(\frac
{k^\ast}{n_i})p_{n_i-r,k}(x) \leqslant
C(w\varphi^{r\lambda})^{-1}(x).\label{s26}
\end{align}
which combining with (\ref{s26}) give
\begin{align*}
|w(x)\varphi^{r\lambda}(x)B_{n,r-1}^{\ast(r)}(f,x)|\leqslant
C\|w\varphi^{r\lambda}f^{(r)}\|.\Box
\end{align*}
So we get the second inequality of the Theorem \ref{t2}.
\subsection{Proof of Theorem \ref{t3}}
\subsubsection{The direct theorem} We know
\begin{align}
F_n(t)=F_n(x)+F'_n(t)(t-x) + \cdots + {\frac{1}{(r-1)!}}\int_x^t
(t-u)^{r-1}F_n^{(r)}(u)du,\label{s27}\\
B_{n,r-1}((\cdot-x)^k,x)=0, \ k=1,2,\cdots,r-1.\label{s28}
\end{align}
According to the definition of $W_{w,\lambda}^{r},$ \ for any $g \in
W_{w,\lambda}^{r},$ we have
$B^\ast_{n,r-1}(g,x)=B_{n,r-1}(G_{n}(g),x),$ and
$w(x)|G_{n}(x)-B_{n,r-1}(G_{n},x)|=w(x)|B_{n,r-1}(R_r(G_n,t,x),x)|,$
thereof $R_r(G_n,t,x)=\int_x^t (t-u)^{r-1}G^{(r)}_n(u)du.$ It
follows from ${\frac {|t-u|^{r-1}}{w(u)}} \leqslant {\frac
{|t-x|^{r-1}}{w(x)}},$ $u$ between $t$ and $x$, we have
\begin{align}
w(x)|G_{n}(x)-B_{n,r-1}(G_{n},x)|  \leqslant
C\|w\varphi^{r\lambda}G^{(r)}_n\|w(x)B_{n,r-1}({\int_x^t{\frac
{|t-u|^{r-1}}{w(u)\varphi^{r\lambda}(u)}du,x}})\nonumber\\
 \leqslant
C\|w\varphi^{r\lambda}G^{(r)}_n\|w(x)(B_{n,r-1}(\int_x^t{\frac
{|t-u|^{r-1}}{\varphi^{2r\lambda}(u)}}du,x))^{\frac 12}\cdot
\nonumber\\
(B_{n,r-1}(\int_x^t{\frac {|t-u|^{r-1}}{w^2(u)}}du,x))^{\frac
12}.\label{s29}
\end{align}
also
\begin{align}
\int_x^t{\frac {|t-u|^{r-1}}{\varphi^{2r\lambda}(u)}}du \leqslant
C{\frac {|t-x|^r}{\varphi^{2r\lambda}(x)}},\ \int_x^t{\frac
{|t-u|^{r-1}}{w^2(u)}}du \leqslant {\frac
{|t-x|^r}{w^2(x)}}.\label{s30}
\end{align}
By (\ref{s6}), (\ref{s29}) and (\ref{s30}), we have
\begin{align}
w(x)|G_{n}(x)-B_{n,r-1}(G_{n},x)| \leqslant
C\|w\varphi^{r\lambda}G^{(r)}_n\|\varphi^{-r\lambda}(x)B_{n,r-1}
(|t-x|^r,x)\nonumber\\
\leqslant Cn^{-\frac r2}{\frac
{\varphi^{r}(x)}{\varphi^{r\lambda}(x)}}\|w\varphi^{r\lambda}G^{(r)}_n\|\nonumber\\
\leqslant Cn^{-\frac r2}{\frac
{\delta_n^r(x)}{\varphi^{r\lambda}(x)}}\|w\varphi^{r\lambda}G^{(r)}_n\|\nonumber\\
= C(\frac
{\delta_n(x)}{\sqrt{n}\varphi^\lambda(x)})^r\|w\varphi^{r\lambda}G^{(r)}_n\|.\label{s31}
\end{align}
By (\ref{s7}), (\ref{s10}), (\ref{s11}) and (\ref{s31}), when $g \in
W_{w,\lambda}^{r},$ then
\begin{align}
w(x)|g(x)-B^\ast_{n,r-1}(g,x)| \leqslant w(x)|g(x)-G_{n}(g,x)| +
w(x)|G_{n}(g,x)-B^\ast_{n,r-1}(g,x)|\nonumber\\
\leqslant |w(x)(g(x)-L_r(g,x))|_{[0,{\frac 2n}]} +
|w(x)(g(x)-R_r(g,x))|_{[1-{\frac 2n},1]}\nonumber\\  +\ \ C(\frac
{\delta_n(x)}{\sqrt{n}\varphi^\lambda(x)})^r\|w\varphi^{r\lambda}G^{(r)}_n\|\nonumber\\
\leqslant C(\frac
{\delta_n(x)}{\sqrt{n}\varphi^\lambda(x)})^r\|w\varphi^{r\lambda}g^{(r)}\|.\label{s32}
\end{align}
For $f \in C_w,$ we choose proper $g \in W_{ w,\lambda}^{r},$ by
(\ref{s13}) and (\ref{s32}), then
\begin{align*}
w(x)|f(x)-{B^\ast_{n,r-1}(f,x)}| \leqslant w(x)|f(x)-g(x)| +
w(x)|{B^\ast_{n,r-1}(f-g,x)}| \\+
w(x)|g(x)-{B^\ast_{n,r-1}(g,x)}|\\
\leqslant C(\|w(f-g)\|+(\frac
{\delta_n(x)}{\sqrt{n}\varphi^\lambda(x)})^r\|w\varphi^{r\lambda}g^{(r)}\|)\\
\leqslant C\omega_{\varphi^\lambda}^r(f,\frac
{\delta_n(x)}{\sqrt{n}\varphi^\lambda(x)})_w. \Box
\end{align*}
\subsubsection{The inverse theorem} We define the weighted main-part
modulus for $D=R_+$ by
\begin{align*}
\Omega_{\varphi^\lambda}^r(C,f,t)_w = \sup_{0<h \leqslant
t}\|w\Delta_{{h\varphi}^\lambda}^rf\|_{[Ch^\ast,\infty]},\\
\Omega_{\varphi^\lambda}^r(1,f,t)_w =
\Omega_{\varphi^\lambda}^r(f,t)_w.
\end{align*}
where $C>2^{1/\beta(0)-1},\ \beta(0)>0$ and $h^\ast$ is given by
\begin{align*}
h^\ast= \left\{
\begin{array}{lrr}
(Ar)^{1/1-\beta(0)}h^{1/1-\beta(0)}, &&  0 \leqslant \beta(0) <1,
    \\
0, &&   \beta(0) \geqslant 1.
              \end{array}
\right.
\end{align*}
The main-part $K$-functional is given by
\begin{align*}
K_{r,\varphi^\lambda}(f,t^r)_w=\sup_{0<h \leqslant
t}\inf_g\{\|w(f-g)\|_{[Ch^\ast,\infty]}+t^r\|w\varphi^{r\lambda}g^{(r)}\|_{[Ch^\ast,\infty]},
\end{align*}
where $g^{(r-1)} \in A.C.((Ch^\ast,\infty))\}$. By (\cite{Totik}),
we have
\begin{align}
C^{-1}\Omega_{\varphi^\lambda}^r(f,t)_w \leqslant
\omega_{\varphi^\lambda}^{r}(f,t)_w \leqslant
C\int_0^t{\frac {\Omega_{\varphi^\lambda}^r(f,\tau)_w}{\tau}}d\tau,\label{s33} \\
C^{-1}K_{r,\varphi^\lambda}(f,t^r)_w \leqslant
\Omega_{\varphi^\lambda}^r(f,t)_w \leqslant
CK_{r,\varphi^\lambda}(f,t^r)_w.\label{s34}
\end{align}
\begin{proof} Let $\delta>0,$ by (\ref{s34}), we choose proper $g$ so
that
\begin{align}
\|w(f-g)\| \leqslant C\Omega_{\varphi^\lambda}^r(f,\delta)_w,\
\|w\varphi^{r\lambda}g^{(r)}\| \leqslant
C\delta^{-r}\Omega_{\varphi^\lambda}^r(f,\delta)_w.\label{s35}
\end{align}
then
\begin{align}
|w(x)\Delta_{h\varphi^\lambda}^rf(x)| \leqslant
|w(x)\Delta_{h\varphi^\lambda}^r(f(x)-{B^\ast_{n,r-1}(f,x)})|+|w(x)\Delta_{h\varphi^\lambda}^rB^\ast_{n,r-1}(f-g,x)|\nonumber\\
 +\ |w(x)\Delta_{h\varphi^\lambda}^r{B^\ast_{n,r-1}(g,x)}|\nonumber\\
\leqslant \sum_{j=0}^rC_r^j(n^{-\frac
12}{\frac {\delta_n(x+({\frac r2}-j)h\varphi^\lambda(x))}{\varphi^\lambda(x+({\frac r2}-j)h\varphi^\lambda(x))}})^{\alpha_0}\nonumber\\
 +\ \ \int_{-{\frac {h\varphi^\lambda(x)}{2}}}^{\frac
{h\varphi^\lambda(x)}{2}}\cdots \int_{-{\frac
{h\varphi^\lambda(x)}{2}}}^{\frac
{h\varphi^\lambda(x)}{2}}w(x){B^{\ast(r)}_{n,r-1}(f-g,x+\sum_{k=1}^ru_k)}du_1\cdots
du_r\nonumber\\
 +\ \ \int_{-{\frac {h\varphi^\lambda(x)}{2}}}^{\frac
{h\varphi^\lambda(x)}{2}}\cdots \int_{-{\frac
{h\varphi^\lambda(x)}{2}}}^{\frac
{h\varphi^\lambda(x)}{2}}w(x){B^{\ast(r)}_{n,r-1}(g,x+\sum_{k=1}^ru_k)}du_1\cdots
du_r\nonumber\\
:= J_1+J_2+J_3.\label{s36}
\end{align}
Obviously
\begin{align}
J_1 \leqslant C(n^{-\frac 12}\varphi^{-\lambda}(x)\delta_n(x))^{\alpha_0}.\label{s37}
\end{align}
By (\ref{s2}) and (\ref{s35}), we have
\begin{align}
J_2 \leqslant Cn^r\|w(f-g)\|\int_{-{\frac
{h\varphi^\lambda(x)}{2}}}^{\frac {h\varphi^\lambda(x)}{2}}\cdots
\int_{-{\frac {h\varphi^\lambda(x)}{2}}}^{\frac
{h\varphi^\lambda(x)}{2}}du_1 \cdots du_r\nonumber\\
\leqslant Cn^rh^r\varphi^{r\lambda}(x)\|w(f-g)\|\nonumber\\
\leqslant
Cn^rh^r\varphi^{r\lambda}(x)\Omega_{\varphi^\lambda}^r(f,\delta)_w.\label{s38}
\end{align}
By the first inequality of (\ref{s3}), we let $\lambda=1,$ and
(\ref{s14}) as well as (\ref{s35}), we have
\begin{align}
J_2 \leqslant Cn^{\frac r2}\|w(f-g)\|\int_{-{\frac
{h\varphi^\lambda(x)}{2}}}^{\frac {h\varphi^\lambda(x)}{2}} \cdots
\int_{-{\frac {h\varphi^\lambda(x)}{2}}}^{\frac
{h\varphi^\lambda(x)}{2}}\varphi^{-r}(x+\sum_{k=1}^ru_k)du_1 \cdots du_r\nonumber\\
\leqslant Cn^{\frac r2}h^r\varphi^{r(\lambda-1)}(x)\|w(f-g)\|\nonumber\\
\leqslant Cn^{\frac
r2}h^r\varphi^{r(\lambda-1)}(x)\Omega_{\varphi^\lambda}^r(f,\delta)_w.\label{s39}
\end{align}
By the second inequality of (\ref{s3}) and (\ref{s35}), we have
\begin{align}
J_3 \leqslant C\|w\varphi^{r\lambda}g^{(r)}\|w(x)\int_{-{\frac
{h\varphi^\lambda(x)}{2}}}^{\frac {h\varphi^\lambda(x)}{2}} \cdots
\int_{-{\frac {h\varphi^\lambda(x)}{2}}}^{\frac
{h\varphi^\lambda(x)}{2}}{w^{-1}(x+\sum_{k=1}^ru_k)}\varphi^{-r\lambda}(x+\sum_{k=1}^ru_k)du_1 \cdots du_r\nonumber\\
\leqslant Ch^r\|w\varphi^{r\lambda}g^{(r)}\|\nonumber\\
\leqslant
Ch^r\delta^{-r}\Omega_{\varphi^\lambda}^r(f,\delta)_w.\label{s40}
\end{align}
Now, by (\ref{s36})-(\ref{s40}), we get
\begin{align*}
|w(x)\Delta_{h\varphi^\lambda}^rf(x)| \leqslant C\{(n^{-\frac
12}\delta_n(x))^{\alpha_0} + h^r(n^{-\frac
12}\delta_n(x))^{-r}\Omega_{\varphi^\lambda}^r(f,\delta)_w +
h^r\delta^{-r}\Omega_{\varphi^\lambda}^r(f,\delta)_w\}.
\end{align*}
When $n \geqslant 2,$ we have
\begin{align*}
n^{-\frac 12}\delta_n(x) < (n-1)^{-\frac 12}\delta_{n-1}(x)
\leqslant \sqrt{2}n^{-\frac 12}\delta_n(x),
\end{align*}
Choosing proper $x, n \in N,$ so that
\begin{align*}
n^{-\frac 12}\delta_n(x) \leqslant \delta < (n-1)^{-\frac
12}\delta_{n-1}(x),
\end{align*}
Therefore
\begin{align*}
|w(x)\Delta_{h\varphi^\lambda}^rf(x)| \leqslant C\{\delta^{\alpha_0}
+ h^r\delta^{-r}\Omega_{\varphi^\lambda}^r(f,\delta)_w\}.
\end{align*}
By Borens-Lorentz lemma in \cite{Totik}, we get
\begin{align}
\Omega_{\varphi^\lambda}^r(f,t)_w \leqslant
Ct^{\alpha_0}.\label{s41}
\end{align}
So, by (\ref{s41}), we get
\begin{align*}
\omega_{\varphi^\lambda}^{r}(f,t)_w \leqslant C\int_0^t{\frac
{\Omega_{\varphi^\lambda}^r(f,\tau)_w}{\tau}}d\tau =
C\int_0^t\tau^{\alpha_0-1}d\tau = Ct^{\alpha_0}.
\end{align*}
\end{proof}


\begin{thebibliography}{10}

\bibitem{Butzer}P.L. Butzer, Linear combinations of Bernstein polynomials, Canad. J. Math. 5 (1953), pp. 559-567.
\bibitem{Berens} H. Berens and G. Lorentz, Inverse theorems for Bernstein polynomials, Indiana Univ. Math. J. 21 (1972), pp. 693-708.
\bibitem{Della}D. Della Vechhia, G. Mastroianni and J. Szabados, Weighted approximation of functions with endpoint and inner singularities by Bernstein operators, Acta Math. Hungar. 103 (2004), pp. 19-41.
\bibitem{Ditzian}Z. Ditzian, A global inverse theorem for combinations of Bernstein polynomials, J. Approx. Theory 26 (1979), pp. 277-292.
\bibitem{Totik}Z. Ditzian and V. Totik, Moduli of Smoothness, Springer-Verlag, Berlin, New York (1987).
\bibitem{Guo}S.S. Guo, C.X. Li and X.W. Liu, Pointwise approximation for linear combinations of Bernstein operators, J. Approx. Theory 107 (2000), pp. 109-120.
\bibitem{Tong}S.S. Guo, H. Tong and G. Zhang, Pointwise weighted approximation by Bernstein operators, Acta Math. Hungar. 101 (2003), pp. 293-311.
\bibitem{Lorentz}G.G. Lorentz, Bernstein Polynomial, University of Toronto Press, Toronto (1953).
\bibitem{Xie}L.S. Xie, Pointwise simultaneous approximation by combinations of Bernstein operators, J. Approx. Theory 137 (2005), pp. 1-21.
\bibitem{L.S. Xie}L.S. Xie, The saturation class for linear combinations of Bernstein operators, Arch. Math. 91 (2008), pp. 86-96.
\bibitem{S. Yu}D.S. Yu,weighted approximation of functions with singularities by combinations of Bernstein operators, J.Applied Mathematics and Computation. 206(2008),pp.906-918.
\bibitem{Yu}D.S. Yu and D.J. Zhao, Approximation of functions with singularities by truncated Bernstein operators, Southeast Bull. Math. 30 (2006), pp. 1178-1189.
\bibitem{Zhou}D.X. Zhou, Rate of convergence for Bernstein operators with Jacobi weights, Acta Math. Sinica 35 (1992), pp. 331-338.
\bibitem{X. Zhou}D.X. Zhou, On smoothness characterized by Bernstein type operators, J. Approx. Theory 81 (1994), pp. 303-315.
\bibitem{Wang}J.J. Zhang, Z.B. Xu, Direct and inverse approximation theorems with Jacobi weight for combinations and higer derivatives of Baskakov operators(in
Chinese), Journal of systems science and mathematical sciences. 2008
28 (1), pp. 30-39.


\end{thebibliography}
\end{document}